\documentclass[11pt,letterpaper]{article}
\usepackage[T1]{fontenc}
\usepackage{lmodern}
\usepackage[margin=1in]{geometry}
\usepackage{amsmath,amssymb,amsthm,mathtools,bm,mathrsfs}
\usepackage{cite}
\usepackage{enumitem,booktabs,microtype,array,placeins,graphicx,float}
\usepackage[hidelinks]{hyperref}
\usepackage{url}

\microtypesetup{expansion=false}
\allowdisplaybreaks
\newtheorem{theorem}{Theorem}[section]
\newtheorem{lemma}[theorem]{Lemma}
\newtheorem{proposition}[theorem]{Proposition}
\newtheorem{corollary}[theorem]{Corollary}
\theoremstyle{remark}

\numberwithin{equation}{section}

\floatstyle{ruled}
\newfloat{algorithm}{tbp}{loa}[section]
\floatname{algorithm}{Algorithm}

\newenvironment{keywords}{\par\small\noindent\textbf{Keywords. }}{\par}
\newenvironment{MSCcodes}{\par\small\noindent\textbf{MSC codes. }}{\par}

\newcommand{\R}{\mathbb R}
\newcommand{\N}{\mathbb N}

\newcommand{\supp}{\operatorname{supp}}

\newcommand{\JSR}{\widehat\rho}
\newcommand{\one}{\bm 1}

\newcommand{\eps}{\varepsilon}
\newcolumntype{P}[1]{>{\raggedright\arraybackslash}p{#1}}

\title{Chronological stability for nonstationary matrix refinement}

\author{Yuwen Li\quad and \quad Yupeng Wang%
\thanks{Corresponding author: Yupeng Wang. School of
Mathematical Sciences, Zhejiang University, Hangzhou, China
(e-mail: yupengw@zju.edu.cn).}}

\date{September 12, 2026}

\hypersetup{
  pdftitle={Chronological stability for nonstationary matrix refinement},
  pdfauthor={Yuwen Li and Yupeng Wang}
}

\begin{document}
\maketitle

\begin{abstract}
A matrix cascade is an iterative refinement process in which vector-valued data are repeatedly filtered by matrix-valued masks across successively finer scales. Such cascades arise in multichannel subdivision, Hermite refinement, multiresolution synthesis, and exponential-spline constructions, and in nonstationary schemes the masks may vary with scale. We develop a chronological stability framework for this setting, allowing both the transition operators and the associated matching spaces to change from level to level. If the chronological radius satisfies $\rho_{\rm ch}<1$ and the matching defects are summable, then the cascade converges uniformly and, for every $q\in(\rho_{\rm ch},1)$,
\[
\|F_n-\Phi\|_\infty
\lesssim
q^n+\sum_{j=1}^n\delta_j q^{\,n-j}
+\sum_{j>n}\delta_j .
\]
The time-weighted defect term records the level at which each perturbation is inserted, and the matching spaces need not converge; they may even alternate indefinitely. For compact piecewise-linear seeds, every finite ordered matrix cascade admits an exact fixed-width ReLU realization with $O(n)$ depth and parameters, and this linear depth order is worst-case optimal at fixed width. Consequently, exponentially decaying defects yield certified approximants of size $O(\log\varepsilon^{-1})$. We further construct stable two-channel classes in which the chronological radius and the leading metric-entropy coefficient are independent invariants, showing that stability does not determine finite-accuracy description complexity. The framework therefore provides tolerance-controlled finite representations for nonstationary multiresolution and exponential-spline refinement.
\end{abstract}

\begin{keywords}
nonstationary subdivision, matrix refinement, multiresolution synthesis,
exponential splines, chronological spectral radius, metric entropy
\end{keywords}
\begin{MSCcodes}
42C40, 41A25, 41A46, 15A60
\end{MSCcodes}

\section{Introduction}
\label{sec:iclr-introduction}

Subdivision and multiresolution synthesis generate curves, signals, and
refinable functions by repeated local filtering.  For a finitely supported
matrix mask $A^{[k]}=(A_j^{[k]})_{j\in J}$, one refinement level acts by
\[
 (V_k f)(x):=\sum_{j\in J} A_j^{[k]} f(2x-j),
 \qquad
 F_n:=V_1V_2\cdots V_n g .
\]
The stationary case has $A_j^{[k]}\equiv A_j$; in the nonstationary case the
mask varies with the scale $k$.  Such level dependence occurs in
scale-dependent multiresolution systems and Gaussian-based interpolatory
schemes \cite{CohenDyn1996,DynLevinYoon2007}.

Matrix masks arise naturally when several reproduced quantities are coupled.
For example, exponential-polynomial reproduction leads to nonstationary
subdivision rules used in geometric modeling \cite{JeongLeeYoon2013}, while
Hermite subdivision evolves vectors of function values and derivatives with
scale-dependent normalization
\cite{ContiCotroneiSauer2016,CotroneiEtAl2019}.  We encode the reproduced
modes by full-row-rank matching maps $P_k$ and write
\[
 K_k:=\ker P_k,
 \qquad
 P_kT_\epsilon^{[k]}=P_{k+1}+E_{k,\epsilon},
 \qquad
 \|E_{k,\epsilon}\|\lesssim\delta_k .
\]
Thus the relevant difference space $K_k$ is allowed to move with the level;
it need not converge to a fixed limiting subspace.  The notation above is
made precise in Section~\ref{sec:iclr-limit}.

The second difficulty is order.  A binary address generates the
noncommutative product
\[
 T_{b_1}^{[1]}T_{b_2}^{[2]}\cdots T_{b_n}^{[n]},
 \qquad b_s\in\{0,1\},
\]
so arbitrary permutations of the level matrices are not dynamically
admissible.  This motivates the chronological radius $\rho_{\rm ch}$,
which measures only consecutive, correctly ordered products.  The main
stability theorem proves, schematically,
\[
 \rho_{\rm ch}<1,
 \quad \sum_{k\ge1}\delta_k<\infty
 \quad\Longrightarrow\quad
 \|F_n-\Phi\|_\infty
 \lesssim
 q^n+\sum_{j=1}^n\delta_jq^{\,n-j}+\sum_{j>n}\delta_j,
 \qquad q>\rho_{\rm ch}.
\]
The factor $q^{n-j}$ retains the insertion time of the $j$th defect; reversing
or shuffling the factors changes this response.  This is the central reason
for keeping the prescribed clock instead of replacing it by an unrestricted
joint spectral-radius problem.

The computational task is therefore concrete: given the masks, a compact
seed $g$, and a target tolerance $\eps$, choose a finite depth $n$ from the
preceding bound and evaluate $F_n$ without tabulating all dyadic cells.  We
also construct an exact fixed-width ReLU realization of this finite ordered
cascade, so the analytical estimate becomes a certified stopping rule and a
finite representation.  The resulting generator can be reused in finite
signal-synthesis sums with known coefficients; Section~\ref{sec:applications}
gives the corresponding propagated error bound.  We do not infer masks from
observations, and convergence of a generator alone does not establish a
complete wavelet or perfect-reconstruction filter bank.

Stationary scalar and vector refinement connects convergence and regularity
to products of restricted transition matrices
\cite{CavarettaDahmenMicchelli1991,DaubechiesLagarias1991,
DaubechiesLagarias1992,HanJia1998,JiaRiemenschneiderZhou1998,
Han2003VectorCascade,Charina2012}.
Nonstationary vector convergence and approximate sum rules are treated in
\cite{CharinaConti2004,CharinaContiGuglielmiProtasov2017}.
The scalar matrix theory in the latter work already uses level-dependent
transition spaces and obtains a limiting space under convergence, with a
cluster-family JSR criterion. Our matrix result instead allows matching
kernels with no limit and tests only chronologically admissible products.
These are distinct extensions, not a claim that level-dependent spaces or
restricted spectral methods are new. Constrained-switching theory also
retains admissible products
\cite{Dai2012ConstrainedJSR,PhilippeEtAl2016Constrained}.
Here the admissibility and the kernel coordinates follow a prescribed,
possibly nonperiodic, level sequence.

For exact finite representation, Daubechies et al.
\cite{DaubechiesEtAl2023} treat iterates $V^n g$ of a single scalar refinement
operator and construct fixed-width, linear-depth ReLU realizations. Related
controller constructions appear in
\cite{Gantumur2026Atlas,BolorkhuuGantumur2026Loop,
Gantumur2026Tensor,BolorkhuuGantumur2026Affine}.
That stationary scalar construction is the starting point for our finite
compiler; the routing and seam-cancellation mechanism itself is not claimed as
new here. We extend it to arbitrary ordered products
$V_{\pi_1}\cdots V_{\pi_n}$ of level-dependent matrix masks, with architecture
bounds independent of the mask values and their order. More importantly, the
stability theorem in this paper is logically prior to, and independent of, the
compiler: it proves convergence and a quantitative tail estimate for
nonstationary matrix products with moving matching kernels and summable
matching defects. Thus the new analytical content is not an assumption of
cascade convergence but a criterion that produces it; the compiler then turns
that estimate into certified finite ReLU approximants.

Metric entropy measures the information required for uniform
reconstruction \cite{KolmogorovTikhomirov1959,BolcskeiEtAl2019}.
The exact hyperrectangle covering formula is known
\cite[Thm.~18]{AllardBolcskei2026Exact}. We realize that geometry inside a
uniformly stable matrix-cascade class, obtain optimal finite rational
network centers, and use the construction to distinguish the clock
statistics controlling stability from those controlling entropy.

\paragraph{Main contributions and comparison with prior work.}
The advances can be separated into four points.
\begin{enumerate}[label=(\roman*),leftmargin=2em,itemsep=3pt,topsep=3pt]
\item \emph{Chronological stability.}
Let $\rho_{\rm ch}$ be the chronological radius of the corrected restrictions
and let $\delta_k$ be the level-$k$ matching defect. We prove
\[
\rho_{\rm ch}<1,
\qquad
\sum_{k\ge1}\delta_k<\infty
\quad\Longrightarrow\quad
F_n=V_1\cdots V_ng\to\Phi
\quad\text{uniformly},
\]
with, for every $q\in(\rho_{\rm ch},1)$,
\begin{equation}
\|F_n-\Phi\|_\infty
\le C_q\left(
q^n+\sum_{j=1}^n\delta_jq^{\,n-j}
+\sum_{j>n}\delta_j
\right).
\label{eq:intro-tail}
\end{equation}
The convolution in \eqref{eq:intro-tail} retains the insertion time of each
defect. In contrast to an unrestricted JSR test, only products compatible with
the prescribed level order enter $\rho_{\rm ch}$.

\item \emph{Moving difference spaces.}
The criterion does not require $\ker P_k$ to converge to a limiting difference
space. Proposition~\ref{prop:matrix-moving-example} gives an exactly matched
matrix cascade with
\[
\delta_k=0,
\qquad
\rho_{\rm ch}=\frac12,
\qquad
F_n\to\Phi\not\equiv0,
\]
while the matching kernels alternate indefinitely. This is the structural
distinction from fixed-space stationary refinement and from reductions that
require a limiting matching geometry.

\item \emph{Exact realization of ordered matrix cascades.}
Extending the stationary scalar setting of \cite{DaubechiesEtAl2023}, every
ordered list of $n$ matrix masks and every fixed compact CPwL seed admit an
exact ReLU realization with
\[
\operatorname{width}=O(1),
\qquad
\operatorname{depth}=O(n),
\qquad
\#\operatorname{par}=O(n).
\]
The construction does not assume stability. For an explicit nonstationary
family with at least $2^n$ affine pieces, any width-$W$ exact ReLU realization
with $L$ hidden layers satisfies
\begin{equation}
(W+1)^L\ge2^n,
\qquad
L\ge\frac{n\log2}{\log(W+1)},
\label{eq:intro-depth}
\end{equation}
so the linear depth order is worst-case optimal. Combining this compiler with
\eqref{eq:intro-tail}, exponentially decaying defects give certified
$O(\log\eps^{-1})$-size approximants.

\item \emph{Stability versus description complexity.}
For an exactly matched two-channel class with $a\in(1/2,1)$,
\begin{equation}
H_\eps(\mathcal C_a)
=
\frac{(\log_2\eps^{-1})^2}{2\log_2a^{-1}}
+O(\log\eps^{-1}).
\label{eq:intro-entropy}
\end{equation}
Variable-rate examples show that the chronological radius and the leading
entropy coefficient depend on different statistics of the level sequence; in
particular, two clocks can have the same $\rho_{\rm ch}$ but different leading
metric entropies. This entropy separation and the associated optimal finite
codes are not part of the stationary exact-realization result.
\end{enumerate}

The rest of the paper is organized as follows.
Section~\ref{sec:iclr-limit} introduces the cascade notation and proves
the stability theorem, with examples of moving matching spaces.
Section~\ref{sec:iclr-exact} constructs exact networks and proves the
depth lower bound. Its application discussion in
Section~\ref{sec:applications} explains tolerance-controlled spline
synthesis and evaluation of known-generator signals.
Section~\ref{sec:iclr-bits} develops metric entropy and finite-accuracy
storage for fixed and variable contraction rates.
Section~\ref{sec:iclr-diagnostics} tests these constructions numerically
and discusses their scope.

\section{Chronological Stability}
\label{sec:iclr-limit}

\subsection{Matrix Cascades}
\label{sec:iclr-setting}

A continuous piecewise-linear function with finitely many breakpoints is
abbreviated CPwL. For vector functions,
$\|f\|_\infty=\sup_x\max_i|f_i(x)|$.
Matrix norms are induced norms, with the Euclidean norm indicated by a
subscript $2$. A ReLU network consists of affine maps separated by
coordinatewise ReLU activations, with an affine output layer. Width is the
largest hidden dimension, and the parameter count includes all affine
entries, including zeros.

Let $\rho(t)=\max\{t,0\}$ act componentwise.  Fix an output dimension $p$,
a finite index set $J\subset\mathbb Z$, and at level $k$ a matrix mask
$A^{[k]}=(A_j^{[k]})_{j\in J}$ with $A_j^{[k]}\in\R^{p\times p}$.  Define
\begin{equation}
 (V_kf)(x)=\sum_{j\in J}A_j^{[k]}f(2x-j),\qquad
 F_\pi=V_{\pi_1}\circ\cdots\circ V_{\pi_n}g .
\label{eq:iclr-cascade}
\end{equation}
The rightmost operator acts first.  This convention matters: the state below
contains the ordered product $T^{[\pi_1]}\cdots T^{[\pi_n]}$.

Choose integers $\ell_-<\ell_+$ containing $J$, $\supp g$, and $[0,1]$, with $g(\ell_-)=g(\ell_+)=0$,
and set $L=\ell_+-\ell_-$ and $D=pL$.  For $x\in[0,1]$, stack integer
translates in the block state
\[
 G_f(x)=\bigl(f(x+\ell_-),\ldots,f(x+\ell_+-1)\bigr)^T\in\R^D.
\]
Set $B_1(x)=\mathbf1_{[1/2,1]}(x)$,
$R(x)=2x-B_1(x)$, and $B_s(x)=B_1(R^{s-1}x)$. Define the
$D\times D$ block transition matrices by
\begin{equation}
 [T_\epsilon^{[k]}]_{i\ell}=
 A_{\ell_-+2i-\ell-1+\epsilon}^{[k]},
 \qquad \epsilon\in\{0,1\}.
\label{eq:iclr-transition}
\end{equation}
Out-of-support mask blocks are zero.  Direct reindexing gives
\begin{equation}
G_{F_\pi}(x)=T_{B_1(x)}^{[\pi_1]}\cdots
T_{B_n(x)}^{[\pi_n]}G_g(R^nx).
\label{eq:iclr-block}
\end{equation}
This identity is the bridge between refinement and a network state.  It also
isolates the difficulty: both $B_s$ and $R$ jump at dyadic seams.

For discrete data, the corresponding subdivision rule is
$(S_Ac)_i=\sum_j A_{i-2j}c_j$: upsampling followed by matrix filtering.
The mask changes this local rule, while the matching map below records the
reproduced modes. The tail bound will control the difference between a
finite-resolution output and its limiting signal.

\subsection{Convergence}

Let $C_{k,\alpha}:K\to K$ denote a uniformly bounded corrected stable
restriction at level $k$.  With $\chi_0=1$, define
\begin{equation}
 \chi_m:=\sup_{a\geq1}\ \sup_{\substack{\alpha_j\in I_j\\a\leq j<a+m}}
 \|C_{a,\alpha_a}\cdots C_{a+m-1,\alpha_{a+m-1}}\|,
 \qquad
 \rho_{\rm ch}:=\lim_{m\to\infty}\chi_m^{1/m}.
\label{eq:chronological-radius}
\end{equation}
The inner supremum varies the admissible transition at every level, whereas
the level indices remain consecutive and increasing.

\begin{proposition}
\label{prop:chronological-radius}
For all $m,\ell\ge0$,
\[
 \chi_{m+\ell}\le \chi_m\chi_\ell,
 \qquad
 \rho_{\rm ch}=\lim_{m\to\infty}\chi_m^{1/m}
 =\inf_{m\ge1}\chi_m^{1/m}.
\]
The value of $\rho_{\rm ch}$ is invariant under equivalent norms and uniformly
conditioned levelwise coordinate changes, and
\[
 \rho_{\rm ch}<1
 \iff (\exists m\ge1)\ \chi_m<1
 \iff (\exists C>0,\ 0<q<1)\ \chi_\ell\le Cq^\ell\ \forall \ell\ge0.
\]
If $\rho_{\rm ch}<1$, then for every $q\in(\rho_{\rm ch},1)$ there is
$C_q>0$ with $\chi_\ell\le C_q q^\ell$.  Moreover,
\[
 A=\operatorname{diag}(2,0),\quad B=\operatorname{diag}(0,2),\quad
 AB=BA=0
 \quad\Longrightarrow\quad
 \rho_{\rm ch}=0<\JSR\{A,B\}=2.
\]
\end{proposition}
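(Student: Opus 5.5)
The plan is a Fekete-type argument adapted to the shifted-window supremum, followed by short consequences.

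\emph{Submultiplicativity.} Fix $a\ge1$ and admissible choices $\alpha_j\in I_j$ for $a\le j<a+m+\ell$. Split the ordered product at level $a+m$:
\[
C_{a,\alpha_a}\cdots C_{a+m+\ell-1,\alpha_{a+m+\ell-1}}
=\bigl(C_{a,\alpha_a}\cdots C_{a+m-1,\alpha_{a+m-1}}\bigr)
\bigl(C_{a+m,\alpha_{a+m}}\cdots C_{a+m+\ell-1,\alpha_{a+m+\ell-1}}\bigr).
\]
The first factor has norm at most $\chi_m$, since its window starts at $a$. The second has norm at most $\chi_\ell$, since its window starts at $a+m\ge1$. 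This is exactly why the supremum over all starting levels $a$ is built into the definition. Submultiplicativity of the induced norm gives $\chi_{m+\ell}\le\chi_m\chi_\ell$, and the cases $m=0$ or $\ell=0$ are trivial because $\chi_0=1$. Uniform boundedness of the $C_{k,\alpha}$ gives $\chi_m\le M^m<\infty$.

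\emph{Existence of the limit.} If some $\chi_{m_0}=0$, then $\chi_m=0$ for all $m\ge m_0$, and both the limit and the infimum equal $0$. Otherwise $\log\chi_m$ is a finite subadditive sequence, and Fekete's lemma gives $\lim\chi_m^{1/m}=\inf_{m\ge1}\chi_m^{1/m}$. To keep the write-up self-contained, I would include the standard step: write $n=km+r$ with $0\le r<m$, so that
\[
\chi_n\le\chi_m^k\max_{r<m}\chi_r ,
\]
and then take $\limsup\chi_n^{1/n}\le\chi_m^{1/m}$. I expect this zero/degenerate bookkeeping to be the only mildly delicate point; the rest is routine.

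\emph{Invariance.} Equivalent norms satisfy $c\|X\|\le\|X\|'\le C\|X\|$ for the induced operator norms. Hence $\chi_m'\le(C/c)\chi_m$ with a constant independent of $m$, and this constant disappears under the $m$th root. For levelwise coordinate changes $C_{k,\alpha}\mapsto S_k^{-1}C_{k,\alpha}S_{k+1}$, the product telescopes to
\[
S_a^{-1}\bigl(C_{a,\alpha_a}\cdots C_{a+m-1,\alpha_{a+m-1}}\bigr)S_{a+m}.
\]
Its norm is at most $\kappa\,\chi_m$, where $\kappa=\sup_k\|S_k^{-1}\|\sup_k\|S_k\|<\infty$ by uniform conditioning. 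The same bound holds in the reverse direction, so again the root removes the constant. (The consecutive index structure is what makes the telescoping work.)

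\emph{Equivalences and the example.}
\begin{itemize}[leftmargin=2em,itemsep=2pt]
\item If $\rho_{\rm ch}<1$, the infimum formula gives some $m$ with $\chi_m^{1/m}<1$, i.e.\ $\chi_m<1$.
\item If $\chi_m<1$, submultiplicativity gives $\chi_\ell\le\chi_m^{\lfloor \ell/m\rfloor}\max_{r<m}\chi_r$. This is bounded by $Cq^\ell$ with $q=\chi_m^{1/m}$ and $C=\chi_m^{-1}\max_{r<m}\chi_r$; if $\chi_m=0$, any $q\in(0,1)$ works.
\item If $\chi_\ell\le Cq^\ell$, then $\rho_{\rm ch}\le\lim C^{1/\ell}q=q<1$.
\item For the refined statement with $q\in(\rho_{\rm ch},1)$: since $\chi_\ell^{1/\ell}\to\rho_{\rm ch}<q$, there is $\ell_0$ with $\chi_\ell\le q^\ell$ for $\ell\ge\ell_0$. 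Take $C_q=\max_{\ell<\ell_0}\chi_\ell q^{-\ell}$, or $1$ if that is smaller.
\end{itemize}
For the example, I interpret the clock as alternating $C_{k}=A$ for odd $k$ and $B$ for even $k$, with singleton $I_k$; this is the reading under which consecutive products are forced to alternate. Every product of length at least $2$ then contains $AB$ or $BA$, both of which are $0$. So $\chi_m=0$ for $m\ge2$ and $\rho_{\rm ch}=0$. Meanwhile $\JSR\{A,B\}\ge\rho(A)=2$, and $\JSR\{A,B\}\le\max(\|A\|_2,\|B\|_2)=2$, so $\JSR\{A,B\}=2$.
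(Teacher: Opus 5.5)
Your proof is correct and follows essentially the same route as the paper's. You split the consecutive product for submultiplicativity, use Fekete's lemma for the limit, telescope the frame factors for invariance, and write $m=us+v$ for the exponential bound; the alternating $A,B$ example matches the paper's. Your explicit handling of the degenerate case $\chi_{m_0}=0$ and of the upper bound $\JSR\{A,B\}\le 2$ fills in small details the paper leaves implicit.
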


The proof is in Section~\ref{sec:stability-proof}. The alternating
matrices above illustrate a distinction at the abstract cocycle level,
not a refinable-mask counterexample. Also, $\chi_m<1$ requires a bound
over every starting level. A finite prefix suffices only with additional
structure, such as known periodicity or a proved uniform tail enclosure.

\paragraph{Matching data}
Let the matrix masks be uniformly bounded with common finite support, let
$g\in C_c(\R;\R^p)$, and let $P_k\in\R^{r\times D}$, $1\leq r<D$, have full row rank with
$\sigma_{\min}(P_k)\geq\gamma>0$ and $\|P_k\|_2\leq\Gamma$.  If $U_k$ is an
orthonormal basis of $\ker P_k$, set
\[
 S_k=[\,U_k\ \ P_k^\dagger\,],\qquad
 \bar T_\epsilon^{[k]}=T_\epsilon^{[k]}-P_k^\dagger
 (P_kT_\epsilon^{[k]}-P_{k+1}).
\]
Let $K=\R^{D-r}\oplus0$ and let $\rho_{\rm ch}$ be the chronological radius of
\begin{equation}
 C_{k,\epsilon}:=
 \left.S_k^{-1}\bar T_\epsilon^{[k]}S_{k+1}\right|_K
 =U_k^TT_\epsilon^{[k]}U_{k+1}.
\label{eq:corrected-chronological-family}
\end{equation}
Define
\begin{equation}
 \delta_k=\max_{\epsilon\in\{0,1\}}
 \|(P_kT_\epsilon^{[k]}-P_{k+1})S_{k+1}\|_2
 +\sup_{t\in[0,1]}\|P_kG_{V_kg-g}(t)\|_2.
\label{eq:iclr-computable-defect}
\end{equation}

\begin{theorem}
\label{thm:iclr-computable-cascade}
Under the matching hypotheses above, assume
\[
 \rho_{\rm ch}<1,\qquad \sum_{k\ge1}\delta_k<\infty.
\]
Then $F_n:=V_1\cdots V_ng\to\Phi$ uniformly for some compactly supported
$\Phi$, and for every $q\in(\rho_{\rm ch},1)$,
\begin{equation}
 \|F_n-\Phi\|_\infty\leq C_q\left(q^n+
 \sum_{j=1}^n\delta_jq^{n-j}+\sum_{j>n}\delta_j\right).
\label{eq:iclr-computable-tail}
\end{equation}
Furthermore:
\[
 \delta_k\lesssim (k+1)^{-\alpha},\ \alpha>1
 \quad\Longrightarrow\quad
 \|F_n-\Phi\|_\infty\lesssim (n+1)^{1-\alpha},
\]
and, for CPwL $g$, fixed-width exact realizations achieve accuracy $\eps$ with
$O(\eps^{-1/(\alpha-1)})$ parameters.  If, in addition,
\[
 \delta_k\lesssim \theta^k,\ 0<\theta<1,
 \quad r\in(\max\{\rho_{\rm ch},\theta\},1)
 \quad\Longrightarrow\quad
 \|F_n-\Phi\|_\infty\lesssim r^n,
\]
with fixed-width exact realizations of size $O(n)$, hence
$O(\log\eps^{-1})$ at accuracy $\eps$.  With
\[
 \Lambda:=\max\left\{1,2\sup_k\sum_{j\in J}\|A_j^{[k]}\|_2\right\},
 \qquad
 \beta_r:=\frac{\log(r^{-1})}{\log(\Lambda/r)},
\]
one also has $\Phi\in C^{0,\beta_r}(\R;\R^p)$.
\end{theorem}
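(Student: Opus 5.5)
The plan is to telescope. Since $F_{m+1}-F_m=V_1\cdots V_m(V_{m+1}g-g)$, the block identity \eqref{eq:iclr-block} gives, for $x\in[0,1]$,
\[
G_{F_{m+1}-F_m}(x)=T^{[1]}_{B_1(x)}\cdots T^{[m]}_{B_m(x)}\,G_{V_{m+1}g-g}(R^mx).
\]
Translates of $x$ cover $\R$ and all supports are uniformly compact, so $\|F_{m+1}-F_m\|_\infty$ is controlled by the norm of this vector. I will first prove an increment bound
\[
\|F_{m+1}-F_m\|_\infty\le C\Bigl(q^m+\sum_{j=1}^{m}\delta_jq^{m-j}+\delta_{m+1}\Bigr).
\]
Summing it over $m\ge n$ gives uniform Cauchy convergence to a compactly supported $\Phi$. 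It also gives \eqref{eq:iclr-computable-tail}: interchanging the double sum yields $\sum_j\delta_j\sum_{m\ge\max(n,j)}q^{m-j}\lesssim\sum_{j\le n}\delta_jq^{n-j}+\sum_{j>n}\delta_j$. The factor $(1-q)^{-1}$ is absorbed into $C_q$, which is why $q$ must sit strictly above $\rho_{\rm ch}$.

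For the increment bound I change coordinates levelwise. Write $\hat T^{[k]}_\epsilon:=S_k^{-1}T^{[k]}_\epsilon S_{k+1}$. Using $S_k^{-1}=[U_k^T;\,(P_k^\dagger)^{+}]$ with $P_kP_k^\dagger=I$ and $P_kU_k=0$, this matrix has the block form
\[
\hat T^{[k]}_\epsilon=\begin{pmatrix}C_{k,\epsilon}&X_{k,\epsilon}\\ Y_{k,\epsilon}&I+Z_{k,\epsilon}\end{pmatrix},
\qquad [\,Y_{k,\epsilon}\ \ Z_{k,\epsilon}\,]=(P_kT^{[k]}_\epsilon-P_{k+1})S_{k+1}.
\]
Here the $P$-row is read in the coordinates $P_k$, so $\|Y_{k,\epsilon}\|+\|Z_{k,\epsilon}\|\le2\delta_k$ and $X_{k,\epsilon}$ is uniformly bounded. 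The bounds $\gamma,\Gamma$ make $S_k$ and $S_k^{-1}$ uniformly conditioned.

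I then push a vector $w=(u,v)$ through $\hat T^{[j]}\cdots\hat T^{[m]}$ from the right, in the order the levels act. Let $a_j,b_j$ be the norms of the $K$- and $P$-components after levels $j,\dots,m$ have acted. I will expand the product over paths. A path that stays in $K$ from level $a$ to level $b$ costs at most $\chi_{b-a}\le C_qq^{b-a}$ by Proposition~\ref{prop:chronological-radius}. The $P$-block contributes at most $\prod(1+2\delta_k)\le e^{2\sum\delta}<\infty$. A $K\to P$ switch via $X$ is $O(1)$, and a $P\to K$ switch via $Y$ at level $j$ costs $\delta_j$.

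Two switches in a row are too crude to bound naively. I would instead use a discrete Gronwall argument on the pair of recursions
\[
a_j\le\|C\text{-products}\|\,a_{j+1}+\|X\|\,b_{j+1},\qquad b_j\le(1+2\delta_j)\,b_{j+1}+2\delta_j\,a_{j+1}.
\]
Together with $\sum\delta<\infty$, this first shows that $b$ stays bounded. Then the full product has $KK$-block $\lesssim q^{m}+\sum_j\delta_jq^{m-j}$, the $PK$ and $KP$ blocks are bounded, and the $PP$ block is bounded.

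The starting vector is $S_{m+1}^{-1}G_{V_{m+1}g-g}$. Its $P$-component is bounded by $\delta_{m+1}$ through the second term of \eqref{eq:iclr-computable-defect}, and its $K$-component is $O(1)$. The final output is multiplied by the bounded $S_1$. This yields the increment bound. I expect the main obstacle to be this bookkeeping: showing that mixed $K\to P\to K$ paths produce only the convolution $\sum\delta_jq^{m-j}$, without extra growth factors coming from the bounded $X$ blocks. If a direct path count fails, I would fall back on an inductive estimate with the weight $\chi$ replaced by $C_qq^\ell$.

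The corollaries are then routine. For $\delta_k\lesssim(k+1)^{-\alpha}$, I split the convolution at $j=n/2$; this gives $\lesssim q^{n/2}+n^{-\alpha}+\sum_{j>n}\delta_j\lesssim n^{1-\alpha}$. Choosing $n\sim\eps^{-1/(\alpha-1)}$ and invoking the $O(n)$ fixed-width exact compiler of Section~\ref{sec:iclr-exact} gives the parameter count. For $\delta_k\lesssim\theta^k$, every term is $\lesssim n\max(q,\theta)^n\lesssim r^n$, so the same compiler gives size $O(\log\eps^{-1})$. For Hölder regularity, $\|F_n'\|_\infty\lesssim\Lambda^n$ piecewise, since each $V_k$ multiplies slopes by at most $2\sum_j\|A^{[k]}_j\|_2$. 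Hence
\[
|\Phi(x)-\Phi(y)|\lesssim r^n+\Lambda^n|x-y|.
\]
Choosing $n$ with $(\Lambda/r)^n\approx|x-y|^{-1}$ gives exponent $\beta_r=\log(r^{-1})/\log(\Lambda/r)$. Here $r$ is any admissible rate, in particular $q$ in the general case.
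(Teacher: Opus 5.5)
Your route is the paper's except for the perturbation bookkeeping. It uses the same frames $S_k$, the same block form of $S_k^{-1}T^{[k]}_\epsilon S_{k+1}$ with defect row $[\,Y\ Z\,]=E_{k,\epsilon}S_{k+1}$, the same telescoping through \eqref{eq:iclr-block}, the same summation, and the same corollaries. However, the assembly step as you state it does not give the increment bound. You conclude that only the $KK$ block of $\mathcal P_{1,m}:=\hat T^{[1]}\cdots\hat T^{[m]}$ is $\lesssim q^m+\sum_j\delta_jq^{m-j}$, and that both off-diagonal blocks are merely bounded. But the input $S_{m+1}^{-1}G_{V_{m+1}g-g}(R^mx)=(u,v)$ has $\|u\|=O(1)$. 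So the $P$-component of the output contains the block sending $K$-inputs to $P$-outputs, applied to $u$. If that block is only $O(1)$, your increment bound is only $O(1)$, which gives neither convergence nor \eqref{eq:iclr-computable-tail}. What you need, and what \eqref{eq:ordered-perturbation} in Lemma~\ref{lem:ordered-perturbation} asserts, is that the \emph{entire} image of a $K$-vector, in both components, is $O\bigl(q^m+\sum_j\delta_jq^{m-j}\bigr)$. Only the block sending $P$-inputs to $K$-outputs is genuinely $O(1)$, and it multiplies $v$, which satisfies $\|v\|\le\delta_{m+1}$.

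The repair is short and removes the multi-excursion counting you were worried about. For $x\in K$ one has $\hat T^{[j]}(x,0)=(C_jx,Y_jx)$. Iterating from level $m$ downward gives the first-exit identity
\[
 \mathcal P_{1,m}(u,0)=(C_1\cdots C_mu,0)+\sum_{j=1}^m\mathcal P_{1,j-1}\bigl(0,\,Y_jC_{j+1}\cdots C_mu\bigr).
\]
Hence $\|\mathcal P_{1,m}(u,0)\|\le\bigl(\chi_m+\sup_{a,b}\|\mathcal P_{a,b}\|\sum_j\delta_j\chi_{m-j}\bigr)\|u\|$. You then only need a bound on all ordered products that is uniform in the starting level and the digits, which your Gronwall step supplies. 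This is the same mechanism as the paper's Duhamel identity \eqref{eq:ordered-duhamel} around the triangular cocycle $\bar M_k=\left(\begin{smallmatrix}C_k&X_k\\0&I\end{smallmatrix}\right)$, whose $K$-column has zero $P$-component.

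Your recursion for $a_j$ must also be written in variation-of-constants form, $u_j=C_j\cdots C_mu_{m+1}+\sum_{i\ge j}C_j\cdots C_{i-1}X_iv_{i+1}$. A one-step bound using $\|C_j\|$ loses the chronological radius entirely: for the alternating pair in Proposition~\ref{prop:chronological-radius}, $\|C_j\|=2$ while every length-two product vanishes.

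Finally, drop the claim that the H\"older exponent may use $r=q$ ``in the general case''. Under mere summability the tail \eqref{eq:iclr-computable-tail} is not $O(q^n)$; for $\delta_k\sim k^{-2}$ it is only $O(n^{-1})$, and the balancing argument then yields only a logarithmic modulus. The exponent $\beta_r$ needs the geometric tail of the exponential-defect case.
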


The pseudoinverse correction enforces
$P_k\bar T_\epsilon^{[k]}=P_{k+1}$, and the SVD controls the frame
conditioning, not the motion of $\ker P_k$.
Section~\ref{sec:stability-proof} proves the theorem directly from one
ordered-product lemma. The defect response is order-sensitive: for
$M_j=\left(\begin{smallmatrix}q&0\\\delta_j&1\end{smallmatrix}\right)$,
right extension of the product gives
\[
 M_1\cdots M_ne_1
 =q^ne_1+\Bigl(\sum_{j=1}^n\delta_jq^{n-j}\Bigr)e_2.
\]
Reversal changes the weight to $q^{j-1}$.

\begin{algorithm}[htbp]
\caption{Cascade truncation}
\label{alg:certified-cascade}
\noindent\textbf{Input.}
Mask and matching rules, a compact CPwL seed $g$, $\eps>0$,
and certified bounds for
Theorem~\ref{thm:iclr-computable-cascade}:
$q\in(\rho_{\rm ch},1)$, the tail constant $C_q$,
and a computable summable envelope $d_k\geq\delta_k$
with a certified tail.
\begin{enumerate}[leftmargin=1.8em,itemsep=3pt,topsep=2pt]
\item Choose $n$ such that
\[
 C_q\left(q^n+\sum_{j=1}^n d_jq^{n-j}+\sum_{j>n}d_j\right)\leq\eps.
\]
\item Compute $U_k,P_k^\dagger,S_k$ for $1\leq k\leq n+1$
and $T_\epsilon^{[k]},C_{k,\epsilon}$ for
$1\leq k\leq n$, $\epsilon\in\{0,1\}$.
Check the defect bounds, evaluating CPwL suprema at breakpoints.
\item Compile $\Phi_n=V_1\cdots V_ng$ using
Theorem~\ref{thm:iclr-exact} in the prescribed level order.
Return $\Phi_n$ with $\|\Phi_n-\Phi\|_\infty\leq\eps$.
\end{enumerate}
\end{algorithm}

The algorithm takes a uniform spectral certificate as input.
At fixed matrix dimension, processing $n$ levels uses $O(n)$
real-arithmetic matrix operations apart from certification. For a CPwL
seed, the residual suprema are attained at finitely many breakpoints.
Floating-point SVDs alone do not certify infinite-sequence bounds.

\FloatBarrier
\subsection{Proofs}
\label{sec:stability-proof}

\begin{proof}[Proof of Proposition~\ref{prop:chronological-radius}]
Splitting a product gives $\chi_{m+\ell}\leq\chi_m\chi_\ell$.
The submultiplicative limit is therefore
$\rho_{\rm ch}=\inf_m\chi_m^{1/m}$. Equivalent norms change $\chi_m$ by
a fixed factor. A uniformly conditioned coordinate change
$C'_{k,\alpha}=R_k^{-1}C_{k,\alpha}R_{k+1}$ telescopes along each product,
again changing its norm by at most a fixed factor. Taking roots proves
both invariances.

For $q>\rho_{\rm ch}$ choose $s$ with $\chi_s<q^s$. Writing $m=us+v$,
$0\leq v<s$, gives
\[
 \chi_m\leq\chi_s^u\chi_v\leq
 q^m\max_{0\leq v<s}q^{-v}\chi_v.
\]
This proves the exponential bound and its equivalence to $\chi_s<1$.
For a uniformly bounded finite-dimensional family, the distance of the
level-$k$ matrices to their cluster family tends to zero. If that family
has JSR below one, an adapted norm contracts it, and therefore all
sufficiently late matrices, by a factor below one. The finite prefix
affects only the prefactor. Finally, alternating $A$ and $B$ have zero
products of length two, while the unrestricted family contains $A^m$
with norm $2^m$. This gives the asserted separation.
\end{proof}

\begin{lemma}
\label{lem:ordered-perturbation}
Let $X=K\oplus E$ and let $M_{k,\alpha},\bar M_{k,\alpha}$ be uniformly bounded with
\[
 \bar M_{k,\alpha}=
 \begin{pmatrix}C_{k,\alpha}&B_{k,\alpha}\\0&I\end{pmatrix},
 \qquad
 \|M_{k,\alpha}-\bar M_{k,\alpha}\|\le\delta_k,
 \qquad
 \sum_k\delta_k<\infty.
\]
If the chronological radius of $\{C_{k,\alpha}\}$ satisfies
$\rho_{\rm ch}<1$, then
\[
 \sup_{a\le b}\ \sup_{\alpha_a,\ldots,\alpha_b}
 \|P_{a,b}\|<\infty,
 \qquad
 P_{a,b}:=M_{a,\alpha_a}\cdots M_{b,\alpha_b},
\]
and, for every $q\in(\rho_{\rm ch},1)$,
\begin{equation}
 \|P_{a,b}(v+w)\|
 \leq C_q\left[
 \left(q^{b-a+1}+\sum_{j=a}^b\delta_jq^{b-j}\right)\|v\|+\|w\|
 \right],
 \quad v\in K,\ w\in E.
\label{eq:ordered-perturbation}
\end{equation}
\end{lemma}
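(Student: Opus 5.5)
Write $M_{k,\alpha}=\bar M_{k,\alpha}+\Delta_{k,\alpha}$ with $\|\Delta_{k,\alpha}\|\le\delta_k$. The plan is to control the exact block-triangular products first and then absorb the perturbations by a discrete Gronwall argument in the prescribed level order. For the unperturbed products $\bar P_{a,b}:=\bar M_{a,\alpha_a}\cdots\bar M_{b,\alpha_b}$, induction on $b$ gives
\[
\bar P_{a,b}=\begin{pmatrix}C_{a}\cdots C_{b}&\sum_{j=a}^{b}C_a\cdots C_{j-1}B_{j}\\0&I\end{pmatrix},
\]
with indices $\alpha$ suppressed. By Proposition~\ref{prop:chronological-radius}, $\|C_a\cdots C_{j-1}\|\le\chi_{j-a}\le C_q q^{j-a}$, and the $B_j$ are uniformly bounded. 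Hence:
\begin{itemize}
\item $\|\bar P_{a,b}v\|\le C_q q^{b-a+1}\|v\|$ for $v\in K$ (after enlarging $C_q$);
\item $\|\bar P_{a,b}w\|\le (1+C\sum_{i\ge0}q^i)\|w\|$ for $w\in E$;
\item $\|\bar P_{a,b}\|\le C_0$ uniformly in $a\le b$ and the $\alpha$'s.
\end{itemize}

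\textbf{Step 1: uniform boundedness.} Expanding $P_{a,b}=\prod(\bar M_k+\Delta_k)$ in the order of the factors, every term is an alternating product of blocks $\bar P$ and single perturbations. A term containing perturbations at levels $j_1<\dots<j_s$ has norm at most $C_0^{s+1}\delta_{j_1}\cdots\delta_{j_s}$. Summing over all terms gives
\[
\|P_{a,b}\|\le C_0\prod_{j}(1+C_0\delta_j)\le C_0\exp\Bigl(C_0\sum_j\delta_j\Bigr)=:C_1,
\]
which is the first claim. Equivalently, one can induct on the telescoping identity
\[
P_{a,b}=\bar P_{a,b}+\sum_{j=a}^b P_{a,j-1}\,\Delta_j\,\bar P_{j+1,b}.
\]

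\textbf{Step 2: the refined estimate \eqref{eq:ordered-perturbation}.} The $w$ part follows at once from Step 1, since $\|P_{a,b}w\|\le C_1\|w\|$. For $v\in K$, use the ordered telescoping identity that places the perturbation to the left of the exact tail:
\[
P_{a,b}v=\bar P_{a,b}v+\sum_{j=a}^{b}P_{a,j-1}\,\Delta_j\,\bar P_{j+1,b}v .
\]
Here $\bar P_{j+1,b}v$ stays in $K$ and has norm at most $C_q q^{b-j}\|v\|$. The prefix satisfies $\|P_{a,j-1}\|\le C_1$ by Step 1, and $\|\Delta_j\|\le\delta_j$. Therefore
\[
\|P_{a,b}v\|\le C_q q^{b-a+1}\|v\|+C_1C_q\sum_{j=a}^b\delta_j q^{b-j}\|v\|,
\]
which is exactly the claimed bound after renaming the constant.

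\textbf{Main obstacle.} The only delicate point is choosing the telescoping in the correct orientation so that the contractive factor $q^{b-j}$ falls on the exact tail acting on $v$, rather than on the perturbed prefix. Step 1 is what makes this orientation harmless, since it bounds the perturbed prefix uniformly with no decay needed. The identity itself is verified by writing $P_{a,j}\bar P_{j+1,b}-P_{a,j-1}\bar P_{j,b}=P_{a,j-1}\Delta_j\bar P_{j+1,b}$ and summing over $j$, with the conventions $P_{a,a-1}=I$ and $\bar P_{b+1,b}=I$. The remaining bookkeeping is that the constants depend only on $q$, $\sup\|B\|$, $\sup\|C\|$ and $\sum\delta_k$, uniformly in $a$, $b$ and the $\alpha$'s.
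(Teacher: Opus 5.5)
Your proof is correct and follows the paper's argument: you use the same block-triangular formula for $\bar P_{a,b}$ and the same ordered Duhamel identity $P_{a,b}=\bar P_{a,b}+\sum_j P_{a,j-1}\Delta_j\bar P_{j+1,b}$, with the perturbation between the perturbed prefix and the exact suffix. You then obtain the uniform bound by a Gronwall-type argument (your expansion $C_0\prod_j(1+C_0\delta_j)$ is an explicit form of it) and finish with the suffix bound $\|\bar P_{j+1,b}v\|\le C_q q^{b-j}\|v\|$ on $K$; the only cosmetic difference is that you bound the $w$-part directly by the uniform bound rather than through the identity, which is equivalent.
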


\begin{proof}
Suppress the selection indices and write
$\bar P_{a,b}=\bar M_a\cdots\bar M_b$.
The stable block is $O(q^{b-a+1})$, and the upper-right block
$B_a+C_aB_{a+1}+\cdots+C_a\cdots C_{b-1}B_b$
is bounded by a geometric series. Thus
$\|\bar P_{a,b}(v+w)\|\leq C_q(q^{b-a+1}\|v\|+\|w\|)$.
With empty products equal to the identity, the exact ordered identity is
\begin{equation}
 P_{a,b}=\bar P_{a,b}
 +\sum_{j=a}^b P_{a,j-1}(M_j-\bar M_j)\bar P_{j+1,b}.
\label{eq:ordered-duhamel}
\end{equation}
It implies
$\|P_{a,b}\|\leq C+C\sum_{j=a}^b\delta_j\|P_{a,j-1}\|$.
Discrete Gronwall and summability give a bound uniform in $a,b$ and the
selections. Applying \eqref{eq:ordered-duhamel} to $v+w$ and using the
stable bound on its corrected suffix proves
\eqref{eq:ordered-perturbation}.
\end{proof}

\begin{proof}[Proof of Theorem~\ref{thm:iclr-computable-cascade}]
Orthogonality of $\ker P_k$ and $\operatorname{range}(P_k^T)$ gives
\[
 S_k^{-1}=\begin{bmatrix}U_k^T\\P_k\end{bmatrix},\qquad
 \|S_k\|_2=\max\{1,\sigma_{\min}(P_k)^{-1}\},\qquad
 \|S_k^{-1}\|_2=\max\{1,\|P_k\|_2\}.
\]
Thus both frame norms are uniform. Set
$E_{k,\epsilon}=P_kT_\epsilon^{[k]}-P_{k+1}$.
Since $P_kP_k^\dagger=I$ and $U_k^TP_k^\dagger=0$,
\[
 P_k\bar T_\epsilon^{[k]}=P_{k+1},\qquad
 S_k^{-1}\bar T_\epsilon^{[k]}S_{k+1}
 =\begin{pmatrix}
 C_{k,\epsilon}&U_k^TT_\epsilon^{[k]}P_{k+1}^\dagger\\0&I
 \end{pmatrix},
\]
and
\[
 S_k^{-1}(T_\epsilon^{[k]}-\bar T_\epsilon^{[k]})S_{k+1}
 =\begin{bmatrix}0\\E_{k,\epsilon}S_{k+1}\end{bmatrix}.
\]
The pulled-back transitions therefore satisfy
Lemma~\ref{lem:ordered-perturbation}. Frame factors telescope in their
chronological products.

Set $h_{n+1}=V_{n+1}g-g$ and split
$S_{n+1}^{-1}G_{h_{n+1}}=v_{n+1}+w_{n+1}$ along
$K=\mathbb R^{D-r}\oplus0$ and its coordinate complement.
The common support window and uniform mask and frame bounds give
$\|v_{n+1}\|_\infty\leq C$, while
\eqref{eq:iclr-computable-defect} gives
$\|w_{n+1}\|_\infty\leq\delta_{n+1}$.
The block identity \eqref{eq:iclr-block} yields
\[
 G_{F_{n+1}-F_n}(t)=T_{B_1(t)}^{[1]}\cdots
 T_{B_n(t)}^{[n]}G_{h_{n+1}}(R^nt).
\]
Applying the lemma and restoring the outer frame gives
\begin{equation}
 \|F_{n+1}-F_n\|_\infty
 \leq C_q\left(q^n+\sum_{j=1}^n\delta_jq^{n-j}
                         +\delta_{n+1}\right).
\label{eq:compact-increment}
\end{equation}
The block norm controls the global norm because all cascades remain in
the same compact support window. Moreover,
\[
 \sum_{n\geq0}\sum_{j=1}^n\delta_jq^{n-j}
 =\frac1{1-q}\sum_{j\geq1}\delta_j<\infty.
\]
Hence the cascade is uniformly Cauchy, with a continuous compactly
supported limit. Summing \eqref{eq:compact-increment} from $n$ onwards
and splitting the defect index at $n$ proves
\eqref{eq:iclr-computable-tail}.

For $\delta_k=O((k+1)^{-\alpha})$, split the convolution at
$\lfloor n/2\rfloor$. Its two parts are $O(q^{n/2})$ and
$O((n+1)^{-\alpha})$, while the future tail is
$O((n+1)^{1-\alpha})$. For $\delta_k=O(\theta^k)$, choose
$\rho_{\rm ch}<q<r$ with $\theta<r<1$. The convolution and future tail
are $O(r^n)$. The exact compiler in Theorem~\ref{thm:iclr-exact}
then gives both parameter estimates.

Finally, for CPwL $g$,
$\operatorname{Lip}(F_n)\leq\operatorname{Lip}(g)\Lambda^n$.
For $h=|x-y|\leq1$,
\[
 \|\Phi(x)-\Phi(y)\|_2
 \leq2C_rr^n+\operatorname{Lip}(g)\Lambda^nh.
\]
Choose $n$ with $(r/\Lambda)^{n+1}<h\leq(r/\Lambda)^n$.
Both terms are $O(h^{\beta_r})$, with $\beta_r$ as stated.
Boundedness handles $h>1$, proving the H\"older conclusion.
\end{proof}

\subsection{Examples}

\begin{proposition}
\label{prop:matrix-moving-example}
Let
\[
 a=(1/2,1,1/2),\qquad
 s_k=\mathbf 1_{2\mid k},\qquad
 C_k=\begin{pmatrix}1&0\\s_k&1\end{pmatrix},\qquad
 B_j=\begin{pmatrix}0&0\\0&a_j\end{pmatrix},\qquad
 A_j^{[k]}=C_kB_jC_{k+1}^{-1}.
\]
There exists a compactly supported CPwL seed $g$ such that
\[
 \delta_k=0,\qquad \rho_{\rm ch}=\frac12,\qquad
 V_1\cdots V_ng\to\Phi\not\equiv0,
\]
while the matching kernels alternate between two distinct subspaces.
\end{proposition}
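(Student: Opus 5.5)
The plan is to exploit the conjugation structure $A_j^{[k]}=C_kB_jC_{k+1}^{-1}$, which makes the whole cascade a gauge transform of a stationary scheme. Write $W$ for the stationary refinement operator with mask $(B_j)$. Then $V_k=C_k\,W\,C_{k+1}^{-1}$ as operators on $\R^2$-valued functions, since the constant matrices commute with dilation and translation. The inner factors telescope, so
\[
 V_1\cdots V_ng=C_1W^nC_{n+1}^{-1}g .
\]
The first channel of $W$ is annihilated, and the second is the scalar hat scheme with mask $a=(1/2,1,1/2)$, indexed on $J=\{-1,0,1\}$. I will take $g=(0,h)^T$ with $h$ the hat function supported on $[-1,1]$.

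Because every $C_k$ is unit lower triangular, $C_k(0,h)^T=(0,h)^T$. Refinability $Wh=h$ then gives $V_kg=g$ for every $k$, hence $F_n=(0,h)^T$ for all $n$. The limit is therefore $\Phi=(0,h)^T\not\equiv0$, and the seed residual term in \eqref{eq:iclr-computable-defect} vanishes identically.

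At the block level, \eqref{eq:iclr-transition} gives
\[
 T_\epsilon^{[k]}=\mathcal D_k\,T^B_\epsilon\,\mathcal D_{k+1}^{-1},
 \qquad \mathcal D_k=\operatorname{diag}(C_k,\ldots,C_k).
\]
I will take $P_0=(e_2^T,\ldots,e_2^T)\in\R^{1\times D}$. The sum rules of the hat mask (even and odd subsums both equal $1$), together with the chosen window, should give $P_0T^B_\epsilon=P_0$ exactly; this needs a check at the window boundary, and if necessary I would enlarge $\ell_\pm$. Setting $P_k=P_0\mathcal D_k^{-1}$ then yields $P_kT^{[k]}_\epsilon=P_{k+1}$, so $\delta_k=0$.

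Each block of $P_k$ is $e_2^TC_k^{-1}=(-s_k,1)$. This row is $(0,1)$ for odd $k$ and $(-1,1)$ for even $k$. Hence $\ker P_k$ alternates between two distinct hyperplanes, while $\|P_k\|_2$ and $\sigma_{\min}(P_k)$ take only two values, so the constants $\gamma,\Gamma$ are uniform.

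For $\rho_{\rm ch}$, note that $\ker P_k=\mathcal D_k\ker P_0$. With $U_0$ an orthonormal basis of $\ker P_0$, the matrices $R_k:=U_k^T\mathcal D_kU_0$ are invertible and take only two values, so they are uniformly conditioned. Since $U_kU_k^T$ is the orthogonal projector onto $\ker P_k$, which is invariant for the relevant maps, one gets
\[
 C_{k,\epsilon}=U_k^TT^{[k]}_\epsilon U_{k+1}=R_k\,\bigl(U_0^TT^B_\epsilon U_0\bigr)\,R_{k+1}^{-1}.
\]
By the coordinate-change invariance in Proposition~\ref{prop:chronological-radius}, $\rho_{\rm ch}$ equals the chronological radius of the stationary family $\{U_0^TT^B_\epsilon U_0\}$. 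For a stationary family this is simply the joint spectral radius of the restriction of $\{T^B_0,T^B_1\}$ to $\ker P_0$.

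That restriction splits into two parts. The first-channel coordinates are mapped to zero. The remainder is the restriction of the hat scheme's transition matrices to the complement of constants.

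I expect the exact evaluation of this joint spectral radius as $1/2$ to be the main obstacle.
\begin{itemize}
\item For the upper bound, pass to difference coordinates. The difference scheme of the hat mask has mask $(1/2,1/2)$, so its restricted matrices have norm $1/2$ in $\ell^\infty$, and every product of length $m$ has norm at most $C\,2^{-m}$ after accounting for the window.
\item For the lower bound, exhibit an eigenvalue $1/2$ of $T^B_0$ on $\ker P_0$. Linear data $c_i=i$ on the window, shifted to lie in $\ker P_0$, should work, or else one can read the spectrum off the explicit $2\times$ banded matrix. This gives $\chi_m\ge c\,2^{-m}$.
\end{itemize}
Together these yield $\rho_{\rm ch}=1/2$, which completes the list of claimed properties.
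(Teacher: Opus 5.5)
Your proposal is correct and is essentially the paper's proof. You use the same gauge identity $T_\epsilon^{[k]}=\mathcal S_kT_{\epsilon,B}\mathcal S_{k+1}^{-1}$ with $\mathcal S_k=I_2\otimes C_k$ and the same $P_k=P_0\mathcal S_k^{-1}=(-s_k,1,-s_k,1)$ on a length-two window. You also make the same reduction of $\rho_{\rm ch}$ to the restriction on $\ker P_0$, where the first channel is annihilated and the one-dimensional second-channel direction $(1,-1)$ is scaled by exactly $1/2$ under both $T_0$ and $T_1$, so the step you flag as the main obstacle is immediate.

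The only real difference is the seed. You take the exactly refinable $(0,h)^T$, so $F_n\equiv g$ and convergence is trivial. The paper instead uses a non-refinable partition-of-unity seed $\psi e_2$ to exhibit genuine convergence $\|\mathcal T_a^n\psi-h\|_\infty\le 2^{-n}\|\psi-h\|_\infty$. Both choices satisfy the existential statement. Your shift of the mask index set to $\{-1,0,1\}$, versus the paper's $\{0,1,2\}$ with the hat on $[0,2]$, is harmless.
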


\begin{proof}
Let $h(t)=\max\{1-|t-1|,0\}$ and define
\[
 u(t)=
 \begin{cases}t/2,&0\leq t\leq1/2,\\
 3t/2-1/2,&1/2\leq t\leq1,\end{cases}
 \qquad
 \psi(t)=
 \begin{cases}u(t),&0\leq t\leq1,\\
 1-u(t-1),&1\leq t\leq2,\\0,&\text{otherwise}.
 \end{cases}
\]
Take $g=\psi e_2$. If $\mathcal T_a$ is scalar refinement by $a$ and $W$
is vector refinement by $B$, then
\[
 V_k=C_kWC_{k+1}^{-1},\qquad
 V_1\cdots V_ng=(\mathcal T_a^n\psi)e_2.
\]
Indeed, every $C_k^{\pm1}$ fixes $e_2$ and $C_1=I$.
Both $\psi$ and $h$ have partition of unity. Their block difference on
$[0,1]$ is a multiple of $(1,-1)^T$, on which both scalar transitions
act by $1/2$. Since $\mathcal T_ah=h$,
$\|\mathcal T_a^n\psi-h\|_\infty\leq
2^{-n}\|\psi-h\|_\infty$.

On the window $[0,2]$, set
$P_0=(0,1,0,1)$,
$\mathcal S_k=I_2\otimes C_k$, and
$P_k=P_0\mathcal S_k^{-1}=(-s_k,1,-s_k,1)$.
The identity
$T_\epsilon^{[k]}=\mathcal S_kT_{\epsilon,B}\mathcal S_{k+1}^{-1}$
gives $P_kT_\epsilon^{[k]}=P_{k+1}$.
Partition of unity gives the zero seed defect.
The matrices $P_k$ have singular values $\sqrt2$ or $2$, and the shears
and their inverses are uniformly bounded. On $\ker P_0$, the first channel
is annihilated and the second-channel difference contracts by $1/2$.
Consequently the chronological radius is $1/2$, also in the orthonormal
matching coordinates. Finally,
$\ker P_k=\mathcal S_k\ker P_0$ alternates, with successive orthogonal
projectors at distance $1/\sqrt2$.
\end{proof}

\label{sec:exponential-diagnostic}

\begin{proposition}
\label{prop:exponential-spline-certificate}
Fix $\lambda_1,\lambda_2\in\R$ and, for $\ell=1,2$, $k\ge1$, set
\[
 r_{\ell,k}=e^{\lambda_\ell2^{-k-1}},\qquad
 a_{\ell,j}^{[k]}=\frac{2\binom2j r_{\ell,k}^j}{(1+r_{\ell,k})^2},
 \quad
 A_j^{[k]}=\operatorname{diag}(a_{1,j}^{[k]},a_{2,j}^{[k]}),
 \quad j=0,1,2,
\]
and
\[
 s_{\ell,k}=e^{\lambda_\ell2^{-k}},\qquad
 P_k=
 \begin{pmatrix}
 (1+s_{1,k}^2)^{-1/2}&0&s_{1,k}(1+s_{1,k}^2)^{-1/2}&0\\
 0&(1+s_{2,k}^2)^{-1/2}&0&s_{2,k}(1+s_{2,k}^2)^{-1/2}
 \end{pmatrix}.
\]
For $g=(h,h)^T$, $h(x)=(1-|x-1|)_+$,
\[
 P_kP_k^T=I_2,\qquad
 \delta_k=O(2^{-k}),\qquad
 \rho_{\rm ch}\le\frac12,
\]
and the corrected cluster JSR equals $1/2$.
and, for every $q\in(1/2,1)$,
\[
 \|V_1\cdots V_ng-\Phi\|_\infty=O_q(q^n).
\]
\end{proposition}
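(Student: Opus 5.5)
The plan is to reduce everything to a per-channel computation and then invoke Theorem~\ref{thm:iclr-computable-cascade} with $\theta=1/2$. Since every $A_j^{[k]}$ is diagonal, the cascade decouples into two scalar nonstationary cascades. Take the window $\ell_-=0$, $\ell_+=2$, so $L=2$ and $D=4$, with the block state ordered as $(f_1(x),f_2(x),f_1(x+1),f_2(x+1))$. By \eqref{eq:iclr-transition}, for each channel the scalar transitions are
\[
T_0=\begin{pmatrix}a_0&0\\a_2&a_1\end{pmatrix},\qquad
T_1=\begin{pmatrix}a_1&a_0\\0&a_2\end{pmatrix},
\]
with $a_j=a_{\ell,j}^{[k]}$. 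Each row of $P_k$ is, in its own channel, the unit vector $(1,s_{\ell,k})/\sqrt{1+s_{\ell,k}^2}$. The rows have disjoint supports, so $P_kP_k^T=I_2$ is immediate. Consequently $\sigma_{\min}(P_k)=\|P_k\|_2=1$, and the masks are uniformly bounded because $r_{\ell,k}\to1$. Hence the matching hypotheses hold.

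Next I would treat every quantity as a smooth function of the small parameter $t=2^{-k}$, with $\lambda_\ell$ fixed. Note that $s_{\ell,k}=r_{\ell,k}^2$ and $s_{\ell,k+1}=r_{\ell,k}$. At $t=0$ the mask is $(1/2,1,1/2)$ and the row vector is $(1,1)/\sqrt2$. One checks $(1,1)T_\epsilon=(1,1)$ for both $\epsilon$, so the defect $P_kT_\epsilon^{[k]}-P_{k+1}$ vanishes at $t=0$. Since all entries are real-analytic in $t$ near $0$, a first-order Taylor expansion gives $\|(P_kT_\epsilon^{[k]}-P_{k+1})S_{k+1}\|_2=O(2^{-k})$, using $\|S_{k+1}\|_2=1$. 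The exact ratio, for instance $s\,a_1/(a_0+s\,a_2)=2r^3/(1+r^4)\neq r$, shows that the defect is genuinely nonzero but of this order. For the seed part of \eqref{eq:iclr-computable-defect}: the hat $h$ is fixed by the mask $(1/2,1,1/2)$, and $a^{[k]}$ differs from that mask by $O(2^{-k})$. Hence $\|V_kg-g\|_\infty=O(2^{-k})$, and since $\|P_k\|_2=1$ this yields $\delta_k=O(2^{-k})$. Only finitely many levels have $|\lambda_\ell|2^{-k}$ not small, and these affect only the constants.

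For the spectral part, take $u_{\ell,k}=(s_{\ell,k},-1)/\sqrt{1+s_{\ell,k}^2}$ in each channel. Then $U_k$ is block diagonal and $C_{k,\epsilon}=\operatorname{diag}(c_{1,k,\epsilon},c_{2,k,\epsilon})$ with $c_{\ell,k,\epsilon}=u_{\ell,k}^TT_\epsilon u_{\ell,k+1}$. At $t=0$, $T_0(1,-1)^T=T_1(1,-1)^T=(1/2,-1/2)^T$, so $c=1/2$, and analyticity gives $c_{\ell,k,\epsilon}=1/2+O(2^{-k})$. The cluster family is therefore $\{\tfrac12 I\}$, whose JSR is $1/2$. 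For the chronological radius, any consecutive product starting at level $a$ is diagonal with norm at most
\[
\prod_{j=a}^{a+m-1}\bigl(\tfrac12+c2^{-j}\bigr)\le 2^{-m}\prod_{j\ge1}(1+2c2^{-j})\le C\,2^{-m},
\]
uniformly in $a$. Hence $\chi_m^{1/m}\le C^{1/m}/2\to1/2$, which gives $\rho_{\rm ch}\le1/2$.

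Finally, I would apply the exponential-defect case of Theorem~\ref{thm:iclr-computable-cascade} with $\theta=1/2$. For any $q\in(1/2,1)$ we have $q>\max\{\rho_{\rm ch},\theta\}$, so $\|V_1\cdots V_ng-\Phi\|_\infty=O_q(q^n)$.

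The main obstacle is making the $O(2^{-k})$ expansions uniform and rigorous. This means checking that the defect and $c_{\ell,k,\epsilon}-1/2$ are Lipschitz in $t$ near $0$, with constants depending only on $\lambda_1,\lambda_2$, including the sign and normalization conventions of $u_{\ell,k}$ and $P_k^\dagger$. I would handle this by writing each quantity as an explicit rational function of $r=e^{\lambda t/2}$ that is regular at $r=1$ and equals its $t=0$ value there; the mean value theorem on $t\in[0,1]$ then suffices.
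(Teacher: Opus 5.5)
Your proposal is correct and follows the paper's proof. Both arguments use the common limit mask $(1/2,1,1/2)$, with $P_\infty T_{\epsilon,\infty}=P_\infty$ and smooth dependence on $2^{-k}$, to get $\delta_k=O(2^{-k})$; both take a smoothly varying orthonormal kernel basis whose corrected restrictions tend to $\tfrac12 I_2$, and both finish with the exponential-defect case of Theorem~\ref{thm:iclr-computable-cascade}. The one difference is the $\rho_{\rm ch}$ step: you bound $\chi_m\le C2^{-m}$ directly via $\prod_j(\tfrac12+c2^{-j})$ using the diagonal structure, whereas the paper deduces $\rho_{\rm ch}\le\tfrac12$ from the cluster-JSR argument in Proposition~\ref{prop:chronological-radius}.
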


\begin{proof}
The scalar symbols are normalized order-two exponential-spline factors
$2((1+r_{\ell,k}z)/(1+r_{\ell,k}))^2$
\cite{JeongLeeYoon2013}. The masks approach $(1/2,1,1/2)$ at rate
$O(2^{-k})$. The rows of $P_k$ are orthonormal, so
$S_k=[U_k,P_k^T]$ is orthogonal. At the common limit,
$P_\infty T_{\epsilon,\infty}=P_\infty$.
Smooth dependence on the exponential parameters therefore gives
$\|P_kT_\epsilon^{[k]}-P_{k+1}\|_2=O(2^{-k})$.
Choose a smoothly varying orthonormal kernel basis. The two corrected
restricted transitions then approach $\frac12 I_2$, giving cluster JSR
$1/2$. The chronological conclusion follows from
Proposition~\ref{prop:chronological-radius}.
Since the limiting mask exactly refines $h$,
$\|V_kg-g\|_\infty=O(2^{-k})$ as well.
Theorem~\ref{thm:iclr-computable-cascade} proves the tail estimate.
\end{proof}

\section{Exact Realization}
\label{sec:iclr-exact}

This result is algebraic and does not assume convergence, matching, or a
spectral gap. Its architecture is uniform over prescribed mask lists,
whereas its weights may depend on those lists.

\begin{theorem}
\label{thm:iclr-exact}
Fix $p$, $J$, and a compactly supported continuous CPwL seed
$g:\R\to\R^p$.  There exist constants $W,C>0$, depending only on
$(p,J,g)$, such that for every ordered list $\pi$ of $n$ masks supported in
$J$ there is a one-input, $p$-output ReLU network $\mathcal N_\pi$ with
\[
 \mathcal N_\pi=F_\pi\ \text{on }\R,\qquad
 \operatorname{width}(\mathcal N_\pi)\le W,
 \qquad
 \operatorname{depth}(\mathcal N_\pi)\le Cn,
 \qquad
 \#\operatorname{par}(\mathcal N_\pi)\le Cn.
\]
\end{theorem}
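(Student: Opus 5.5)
We build the network by running the block identity \eqref{eq:iclr-block} forward in $x$ while carrying a matrix-valued state backward in the product. Fix $\ell_\pm$, $L$, $D=pL$ as in Section~\ref{sec:iclr-setting}, chosen from $J$ and $\supp g$ only. For $x\in[0,1]$ the network will carry the pair $(x_s,Y_s)$ with $x_s=R^sx$ and a row-block state. Since the product $T^{[\pi_1]}_{B_1}\cdots T^{[\pi_n]}_{B_n}$ has its leftmost factor selected first by the bit $B_1(x)$, we process the bits in the order $s=1,\dots,n$. We maintain $Y_s=e^T\,T^{[\pi_1]}_{B_1(x)}\cdots T^{[\pi_s]}_{B_s(x)}\in\R^{p\times D}$, where $e^T$ extracts the block of $G_f$ that corresponds to $f(x)$ itself, namely the block with shift $0$, which lies in the window because $\ell_-\le0<\ell_+$. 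This state has fixed dimension $pD$. At the end we output $Y_n\,G_g(x_n)$.

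Each step must implement $Y\mapsto Y\,T^{[\pi_{s}]}_{\epsilon}$ with $\epsilon=B_1(x_{s-1})$, together with $x\mapsto R(x)$. Because $Y$ is bounded only a priori on a compact parameter set, we write the selection as a ReLU gate. Let $M$ bound the relevant entries, for instance through a per-list constant absorbed into the weights. Then set $Y T_0 + \bigl(\rho(YT_1-YT_0+M\epsilon')-\rho(-(YT_1-YT_0)+M\epsilon')\bigr)$-type clipping, with $\epsilon'$ a ReLU surrogate of the bit. This uses $O(pD)$ neurons and a constant number of layers per level. The final evaluation $G_g(x_n)$ is a fixed CPwL map of $x_n\in[0,1]$ and has a fixed-size network. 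The bilinear product $Y_nG_g(x_n)$ is the one genuinely nonlinear step. We avoid it by never forming $G_g$ separately: we absorb $g$'s finitely many linear pieces into the state from the start, running the recursion on the right factor $T\cdots T\,G_g(R^nx)$ instead. Concretely, we use the scalar routing of \cite{DaubechiesEtAl2023}: the state is $x_s$ together with a vector $G_{F^{(s)}}$ computed from the innermost level outward. Each layer maps it affinely in the state and piecewise affinely in $x_s$ with the fixed two-piece structure of $R$, so no multiplications arise.

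The main obstacle is the discontinuity of $B_s$ and $R$ at dyadic seams. A ReLU network is continuous, so it cannot reproduce the bit exactly, and the surrogate bit $\epsilon'$ must be a steep ramp. We plan to adapt the seam-cancellation argument of \cite{DaubechiesEtAl2023} to the matrix setting. By the choice $g(\ell_-)=g(\ell_+)=0$, the two one-sided limits of the right-hand side of \eqref{eq:iclr-block} at a dyadic point coincide, because $F_\pi$ is continuous. In block form, $T^{[k]}_1G(0)$ equals $T^{[k]}_0G(1)$ up to a reindexing shift. We will verify this shift compatibility directly from \eqref{eq:iclr-transition} for arbitrary matrix masks, independently of their values and order. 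It then follows that replacing the jump by a linear ramp on a window of width $2^{-N}$ yields a function agreeing with $F_\pi$. The reason is that the interpolated values on the ramp are still exactly given by continuous CPwL data, since affine interpolation between matching boundary states commutes with the linear maps $T$.

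Finally we extend from $[0,1]$ to $\R$. Write $F_\pi=\sum_{m}F_\pi(\cdot+m)\mathbf 1_{[0,1)}$ over the $L$ integer cells of the support window, which is fixed by $(J,g)$. We realize each cell by the same construction with a different output block of $G$, and glue the cells with a fixed-size CPwL partition. Counting gives width $W=O(pD)$ and depth $\le Cn$. The parameters number $O((pD)^2)$ per level, hence $O(n)$ in total, with all constants depending only on $(p,J,g)$.
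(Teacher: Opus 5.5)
There is a genuine gap; in fact there are two, and both sit exactly where the paper's construction does its real work.

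\textbf{The seam step.} It does not work as you describe it. Any continuous surrogate $\widehat R$ that agrees with $R$ off a window around $x=1/2$ must, by the intermediate value theorem, pass from about $1$ down to about $0$ inside that window. So $G_g(\widehat R(x))$ sweeps through essentially all of $G_g([0,1])$ while your selector $\epsilon'$ is fractional.

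Already for $n=1$, at the point of the window where $\widehat R(x)=1/2$, the network combines $T_0G_g(1/2)$ and $T_1G_g(1/2)$. By contrast, $F_\pi$ on that window stays within $O(2^{-N})$ of $T_0G_g(1)=T_1G_g(0)$. For generic masks these disagree. Even an exact affine interpolation between the matching one-sided states would miss the kink that $F_\pi$ generally has at the seam. Continuity of $F_\pi$ therefore gives no exactness.

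The paper instead makes the cascade \emph{vanish} near every seam:
\begin{itemize}
\item It reduces to atoms $g=hv$ with $\supp h\subset[1/8,7/8]$.
\item It computes the terminal factor $H_n=h\circ R^n$ exactly, through the tent-map identity $\tau^{n+1}(t)=2^{n+1}\operatorname{dist}(t,2^{-n}\mathbb Z)$ and the cutoff $P_M$, so that $H_n=0$ on the seam set $E_n$.
\item It uses the gate $\Pi_M$, which satisfies $\Pi_M(s,0)=0$ for all $s\in[0,1]$. Fractional selectors then act only on a zero state.
\end{itemize}
General seeds are reached by a fine hat decomposition into translated atoms and the commutation $V_k\tau_\delta=\tau_{\delta/2}V_k$, not by a continuity argument.

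\textbf{The bilinear step.} This is also left unresolved. You correctly observe that the forward row state $Y_s$ ends with the product $Y_nG_g(x_n)$. Your fix, a vector state built ``from the innermost level outward'', consumes the bits in the order $B_n(x),\dots,B_1(x)$. The forward orbit $x_s=R^sx$ produces them in the order $B_1,\dots,B_n$. Since $R$ is two-to-one, $R^{s-1}x$ cannot be recovered from $R^sx$. The obvious remedies, storing the bits or recomputing each $R^{s-1}x$ from $x$, cost width $O(n)$ or depth $O(n^2)$.

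The missing idea is the paper's two-pass scheme, which exploits the fact that an atom has rank-one block state $G_g(y)=h(y)q$:
\begin{itemize}
\item First compute the scalar $H_n(x)$ while carrying a copy of $x$.
\item Then run the forward recursion \eqref{eq:state-recursion} from $z_r^0=H_n(x)e_r$ with the transposes $(T^{[\pi_s]}_\epsilon)^T$, recomputing $\widehat R_n^{s-1}(x)$ in parallel.
\end{itemize}
The identity $(T_s)^T\cdots(T_1)^T=(T_1\cdots T_s)^T$ restores the correct order, and $q^Tz_r^n$ gives the output with no multiplication.

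Two smaller points. Gluing cells with $\mathbf 1_{[0,1)}$ is not ReLU-realizable; the paper uses the fact that atom cascades vanish at all integers, so the clamped evaluations $[G_{F_\pi}]_i(r_i(x))$ can simply be summed. Your displayed clipping gate is also wrong: with $u=YT_1-YT_0$, it returns $u$ instead of $0$ at $\epsilon'=0$, and $2u$ instead of $u$ at $\epsilon'=1$.
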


\begin{proposition}
\label{thm:iclr-depth-lower}
There exist a compactly supported CPwL seed $g$ and bounded scalar masks
$a^{[k]}$ supported in $\{0,1\}$ such that
\[
 \#\operatorname{pieces}(F_n)\ge2^n.
\]
Hence every width-$W$, depth-$L$ ReLU network representing $F_n$ exactly
satisfies
\begin{equation}
 (W+1)^L\geq 2^n,
 \qquad
 L\geq \frac{n\log 2}{\log(W+1)}.
\label{eq:iclr-depth-lower}
\end{equation}
Thus fixed-width exact realization requires $L=\Omega(n)$ and
$\#\operatorname{par}=\Omega(n)$.
\end{proposition}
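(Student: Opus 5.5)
Two ingredients are needed: an explicit family whose cascade has at least $2^n$ affine pieces, and the standard upper bound on the number of pieces of a one-input ReLU network of given width and depth.

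\emph{Construction.} The simplest candidate has masks supported in $\{0,1\}$ with non-equal weights, say $a^{[k]}_0=1$, $a^{[k]}_1=c_k$. I would choose $c_k$ to be generic, for example algebraically independent, or rapidly growing such as $c_k=3^{-k}$ inside a bounded range. The seed is the hat $g(x)=(1-|2x-1|)_+$ supported on $[0,1]$.

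By \eqref{eq:iclr-block} with $p=1$, on each dyadic cell $I$ of length $2^{-n}$ with address $b_1\cdots b_n$, $F_n$ is a finite sum of terms
\[
\Bigl(\prod_s a^{[s]}_{j_s}\Bigr)\, g\bigl(2^nx-m\bigr).
\]
Here $m=\sum_s j_s2^{n-s}$ with $j_s\in\{0,1\}$, so each integer $m\in[0,2^n-1]$ arises from exactly one binary word. Hence
\[
F_n(x)=\sum_{m=0}^{2^n-1} w_m\, g(2^nx-m),
\]
where the weight $w_m=\prod_s a^{[s]}_{j_s(m)}$ is the product of the $c_s$ over the one-bits of $m$.

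Since $g$ is a hat, $F_n$ is piecewise linear on the grid of spacing $2^{-n-1}$. On $[m2^{-n},(m+1)2^{-n}]$ it rises from $0$ to $w_m$ at the midpoint and falls back to $0$, with slopes $\pm 2^{n+1}w_m$. If every $w_m\neq 0$, each of the $2^n$ cells contains a genuine kink at its midpoint. Taking all $c_k>0$ ensures $w_m\neq0$, so there are at least $2^n$ breakpoints and at least $2^n$ pieces. Even constant $c_k=1/2$ suffices: $w_m>0$ forces the slope to change sign, and non-generic values are unnecessary. The masks are bounded and supported in $\{0,1\}$, as required.

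\emph{Counting bound.} Next I need the fact that a one-input ReLU network of width $W$ with $L$ hidden layers computes a CPwL function with at most $(W+1)^L$ pieces. I would prove this by induction on layers. Suppose the input line is partitioned into at most $N$ intervals on which every neuron of the previous layer is affine. Each of the $W$ new neurons is then affine on each interval, and its ReLU adds at most one breakpoint per interval. The total number of intervals is therefore at most $N+WN=(W+1)N$. Starting from $N=1$ gives $(W+1)^L$, and the affine output layer adds no breakpoints.

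Combining the two parts gives $2^n\le(W+1)^L$. Taking logarithms yields $L\ge n\log2/\log(W+1)$. At fixed width this means $L=\Omega(n)$. Every hidden layer carries at least one parameter, so the parameter count is at least $L$ and hence $\Omega(n)$.

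The main obstacle is the index bookkeeping in the first part. I must check that the block identity, under the convention $(V_kf)(x)=\sum_j a_jf(2x-j)$, really produces $m=\sum_s j_s2^{n-s}$ with distinct $m$ for distinct words, and that the support of $F_n$ is $[0,1]$. I would verify this directly by induction: $F_{n}=V_1(V_2\cdots V_n g)$, and the rescaling step $x\mapsto 2x-j$ maps the cells of the previous level into the left and right halves without overlap. Positivity of the weights removes any possible cancellation.
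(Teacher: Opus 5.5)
Your proposal is correct and follows essentially the same route as the paper: a hat seed on $[0,1]$ and a two-tap mask on $\{0,1\}$ whose cascade puts a nonzero multiple of $g(2^nx-m)$ on every dyadic cell, so each of the $2^n$ midpoints is a genuine kink, followed by the same layer-by-layer count giving at most $(W+1)^L$ pieces. The only difference is cosmetic: the paper takes $a_0^{[k]}=1$, $a_1^{[k]}=-\gamma_k$ with $\gamma_k$ alternating between $1$ and $2$ so that the example is genuinely nonstationary, whereas your positive (even constant) weights also satisfy the statement as written, since only $w_m\neq0$ is needed.
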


The construction has two steps.
A continuous router computes the terminal seed while canceling ambiguous
dyadic seams. An exact selector then updates a fixed-dimensional dual
state using $(T_\epsilon^{[\pi_s]})^T$ at step $s$. The identity
$(T_s)^T\cdots(T_1)^T=(T_1\cdots T_s)^T$
preserves the original cascade order. A fixed hat decomposition and
integer-cell patching handle a general compact CPwL seed.
Thus the construction executes the refinement rules rather than storing
every affine piece. Section~\ref{sec:obstruction} proves the lower bound.

\subsection{Routing}
\label{sec:exact-realization}

Consider first a localized atom
\begin{equation}\label{eq:special-atom}
 g(x)=h(x)v,\qquad h\geq0,\qquad
 \supp h\subset[1/8,7/8],\qquad v\in\R^p,
\end{equation}
where $h$ is CPwL. Set $i_0:=1-\ell_-$. Since the block window contains
$[0,1]$, one has $1\leq i_0\leq L$, and
\[
 G_g(y)=h(y)q,\qquad
 q=e_{i_0}^{(L)}\otimes v\in\R^D,
 \qquad y\in[0,1].
\]

The maps $B_s$ are discontinuous, while every ReLU network is continuous.
The following controller makes that cancellation explicit. It is the
mask-independent part of the stationary construction in
\cite{DaubechiesEtAl2023}, written here in
a form that accepts level-dependent transition matrices.

\begin{lemma}
\label{lem:controller}
For the atom in \eqref{eq:special-atom} and every $n\ge1$, there is a
mask-independent fixed-width, $O(n)$-depth ReLU construction of continuous
maps $\widehat R_n,\widehat\chi_{0,n},\widehat\chi_{1,n}:[0,1]\to[0,1]$,
a set $E_n\subset[0,1]$, and $H_n$ such that
\[
 H_n=h\circ R^n,\qquad H_n|_{E_n}=0,
\]
and, for $x\notin E_n$, $1\le s\le n$, $\eps\in\{0,1\}$,
\[
 \widehat R_n^{\,s-1}(x)=R^{s-1}(x),\qquad
 \widehat\chi_{\eps,n}(\widehat R_n^{\,s-1}(x))
 =\mathbf1_{\{B_s(x)=\eps\}}.
\]
The construction also propagates $x$ exactly to the selector stage.
\end{lemma}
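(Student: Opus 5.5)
The plan is to build the controller from a continuous ``tent-type'' surrogate for the doubling map $R$, and then to show that the error between surrogate and true map lives only near dyadic seams, where the terminal factor $h\circ R^n$ already vanishes. Fix a small margin $\eta=1/16$ (any $\eta<1/8$ will do) and define the continuous piecewise-linear map $\widehat R:[0,1]\to[0,1]$ by $\widehat R(y)=2y$ on $[0,1/2-\eta']$ and $\widehat R(y)=2y-1$ on $[1/2+\eta',1]$, joined linearly on $[1/2-\eta',1/2+\eta']$. Here $\eta'=\eta/2$, so that the transition strip is mapped into $[1-\eta,1]\cup[0,\eta]$ after clipping. Analogously, $\widehat\chi_1$ is the continuous ramp that equals $0$ on $[0,1/2-\eta']$ and $1$ on $[1/2+\eta',1]$, with $\widehat\chi_0=1-\widehat\chi_1$. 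Each of these maps is a fixed ReLU expression of constant width: it is a combination of $\rho(y-c)$ terms, followed by clipping to $[0,1]$ via $\min\{\max\{\cdot,0\},1\}$. We set $\widehat R_n:=\widehat R$ for all $n$ and $\widehat\chi_{\eps,n}:=\widehat\chi_\eps$; the index $n$ is only bookkeeping. Each iterate $\widehat R^{\,s}$ costs $O(1)$ layers. To carry $x$ to the selector stage we append one identity channel $x=\rho(x)$, which is valid since $x\ge0$ on $[0,1]$. The $n$ iterates then give width $O(1)$ and depth $O(n)$, with no mask entering the construction.

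Next we define the exceptional set
\[
 E_n:=\{x\in[0,1]:\ \exists\, s\le n,\ R^{s-1}x\in(1/2-\eta',1/2+\eta')\}\cup\{x: R^{n}x\in[0,\eta]\cup[1-\eta,1]\}.
\]
The agreement claim is proved by induction on $s$. If $x\notin E_n$, then every orbit point $R^{s-1}x$ with $s\le n$ lies outside the central strip. On that region $\widehat R=R$ and $\widehat\chi_\eps(y)=\mathbf 1_{\{B_1(y)=\eps\}}$, so $\widehat R^{\,s-1}(x)=R^{s-1}(x)$ for all $s\le n$. Because $B_s(x)=B_1(R^{s-1}x)$, this yields the selector identity $\widehat\chi_{\eps}(\widehat R^{\,s-1}x)=\mathbf 1_{\{B_s(x)=\eps\}}$.

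For the terminal seed we set $H_n:=h\circ\widehat R^{\,n}$. Since $h$ is CPwL this is again a fixed-width ReLU block appended to the iteration. Off $E_n$ we have $H_n=h\circ R^n$. On $E_n$ we must show that both $h\circ\widehat R^{\,n}$ and $h\circ R^n$ vanish, which is where the support condition $\supp h\subset[1/8,7/8]$ is used.

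This vanishing is the main obstacle. Suppose $R^{s-1}x$ lies in the central strip for some $s\le n$. Then $R^{s}x$ lies within $2\eta'=\eta$ of $\{0,1\}$ modulo the jump, and the surrogate $\widehat R^{\,s}x$ lies in $[0,\eta]\cup[1-\eta,1]$. We must propagate these ``near-endpoint'' statements forward. The difficulty is that under doubling, $[0,\eta]$ expands to $[0,2\eta]$, so the endpoint neighbourhoods are not invariant. Working with exact dyadic margins resolves this. Following the stationary construction of \cite{DaubechiesEtAl2023}, I would replace the single strip by the level-adapted strip $(1/2-2^{-(n-s)}\eta,\,1/2+2^{-(n-s)}\eta)$ at step $s$; this is the reason the lemma indexes the maps by $n$. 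With this choice a point entering the strip at step $s$ stays within $\eta\,2^{-(n-s)}2^{n-s}=\eta<1/8$ of $\{0,1\}$ after the remaining doublings, both for $R$ and for the surrogate. The surrogate is still a map of the form $y\mapsto$ clipped $2y$ or $2y-1$, so its orbit stays in the same endpoint neighbourhoods up to clipping. Hence $h$ vanishes on both orbits, and we conclude $H_n=h\circ R^n$ everywhere and $H_n|_{E_n}=0$. The level-adapted ramps still cost $O(1)$ parameters each, preserving width $O(1)$ and depth and parameter count $O(n)$. The remaining risk is the bookkeeping that the clipped surrogate near $1$ versus near $0$ always lands in $\{h=0\}$; I would check this carefully by a case split on which side of the seam $R^{s-1}x$ falls.
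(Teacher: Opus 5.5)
There is a genuine gap, and it sits exactly at the step you flagged as the main obstacle. Your surrogate is continuous on the transition strip and must connect the value $1-2\eta'$ at its left end to $2\eta'$ at its right end. By the intermediate value theorem it therefore maps the strip onto all of $[2\eta',1-2\eta']$. The claim that the strip is sent into $[0,\eta]\cup[1-\eta,1]$ ``after clipping'' is false, since clipping to $[0,1]$ does not touch these values. On the strip the surrogate is a steep decreasing ramp, not a map of the form $2y$ or $2y-1$, and level-adapted strips do not change this.

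The simplest counterexample is $x=1/2$. It lies in every strip and is a fixed point of every such symmetric ramp, so $\widehat R^{\,n}(1/2)=1/2$ and your $H_n(1/2)=h(1/2)$. This is nonzero for, e.g., a hat centred at $1/2$, whereas $R^n(1/2)=0$ and $h(R^n(1/2))=0$. More generally, points of the strip that are sent to non-seam values afterwards follow genuine doubling orbits, so $h\circ\widehat R^{\,n}$ takes essentially all values of $h$ on $E_n$. Hence $H_n:=h\circ\widehat R^{\,n}$ neither equals $h\circ R^n$ nor vanishes on $E_n$, and no choice of continuous surrogate alone can repair this. The later state recursion relies on $H_n|_{E_n}=0$ to neutralise the fractional selector values near seams, so this is the heart of the lemma, not bookkeeping.

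The missing ingredient is a separate, exactly computable seam detector, combined with $h(\widehat R_n^{\,n})$ through a multiplication-free gate. The paper uses the tent map $\tau$. It is continuous, so it iterates exactly under ReLU, and $\tau^{n+1}(t)=2^{n+1}\operatorname{dist}(t,2^{-n}\mathbb Z)=2\operatorname{dist}(R^nt,\{0,1\})$.

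The paper uses one narrow strip of half-width $d_n=2^{-n-5}$ at every step. This also keeps $\widehat R_n$ a single iterated map, as the statement's $\widehat R_n^{\,s-1}$ requires, rather than your step-dependent family. With this choice every $x\in E_n$ satisfies $\tau^{n+1}(x)<1/16$.

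Put $q_n=\zeta(\tau^{n+1})$, where the ramp $\zeta$ is $0$ below $1/16$ and $1$ above $1/8$. Then set $H_n=P_M(q_n,h(\widehat R_n^{\,n}))$, using a two-layer ReLU gate with $P_M(0,y)=0$, $P_M(1,y)=y$, $P_M(s,0)=0$ for $|y|\le M$. This gives $H_n=0$ on $E_n$ whatever the surrogate value is. Off $E_n$, either $q_n=1$, giving $H_n=h(R^nx)$, or $q_n<1$. In the latter case $R^nx$ is within $1/16$ of an endpoint, so $h(\widehat R_n^{\,n}x)=h(R^nx)=0$.

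Your agreement argument off $E_n$ is correct, as is your observation that $h\circ R^n$ itself vanishes on the seam set. These are exactly what this gate needs.
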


\begin{proof}
Let $d_n:=2^{-n-5}$ and define
\[
 \widehat\chi_{1,n}(t):=
 \frac{\rho(t-\tfrac12+d_n)-\rho(t-\tfrac12-d_n)}{2d_n},
 \qquad
 \widehat\chi_{0,n}:=1-\widehat\chi_{1,n},
\]
and
\[
 \widehat R_n(t):=2t-\widehat\chi_{1,n}(t).
\]
The selector is zero to the left of $1/2-d_n$, one to the right of
$1/2+d_n$, and affine in between. Consequently, $\widehat R_n$ agrees
with $R$ outside that transition interval and maps $[0,1]$ into itself.
Define
\begin{equation}\label{eq:seam-set}
 E_n:=\left\{x\in[0,1]:
 |R^j(x)-\tfrac12|<d_n\ \text{for some }0\leq j<n\right\}.
\end{equation}
If $x\notin E_n$, induction on $j$ gives
\begin{equation}\label{eq:exact-continuous-orbit}
 \widehat R_n^j(x)=R^j(x),\qquad
 \widehat\chi_{\eps,n}(\widehat R_n^{j-1}(x))
 =\mathbf 1_{\{B_j(x)=\eps\}},
 \quad 1\leq j\leq n.
\end{equation}

We next construct an exact terminal factor. If $x\in E_n$, then for some
$0\leq j<n$ and an integer $m$,
\[
 |2^jx-m-\tfrac12|<d_n.
\]
The dyadic point $(2m+1)2^{-j-1}$ belongs to $2^{-n}\mathbb Z$, so
\begin{equation}\label{eq:seam-distance}
 \operatorname{dist}(x,2^{-n}\mathbb Z)<2^{-j}d_n\leq d_n.
\end{equation}
Let
\[
 \tau(t):=2\rho(t)-4\rho(t-\tfrac12)+2\rho(t-1).
\]
For $t\in[0,1]$, this is the tent map. Its iterates satisfy
\begin{equation}\label{eq:tent-distance}
 \tau^{n+1}(t)=2^{n+1}\operatorname{dist}(t,2^{-n}\mathbb Z).
\end{equation}
For $n=0$, the identity is the formula for the tent itself. For the
induction step, write $u=2^n\operatorname{dist}(t,2^{-n}\mathbb Z)$,
so that $0\leq u\leq1/2$. Then
$\tau(2u)=4\min\{u,1/2-u\}$, and the minimum equals
$2^n\operatorname{dist}(t,2^{-(n+1)}\mathbb Z)$. This gives the next
iterate in \eqref{eq:tent-distance}.
Finally, set
\[
 \zeta(u):=16\rho(u-\tfrac1{16})-16\rho(u-\tfrac18),
 \qquad q_n(t):=\zeta(\tau^{n+1}(t)),
\]
and, with $M_h:=\max\{1,\|h\|_\infty\}$, define
\[
 P_M(s,y):=-\rho(Ms-y)-\rho\bigl(M(1-s)-\rho(-y)\bigr)+M
\]
and
\begin{equation}\label{eq:terminal-cutoff}
 H_n(t):=P_{M_h}\bigl(q_n(t),h(\widehat R_n^n(t))\bigr).
\end{equation}
A direct three-case calculation gives, for $|y|\leq M$ and $s\in[0,1]$,
\[
 P_M(0,y)=0,\qquad P_M(1,y)=y,\qquad P_M(s,0)=0.
\]

The distance identity linking the terminal residual to dyadic seams is
\[
 \operatorname{dist}(R^nt,\{0,1\})
 =2^n\operatorname{dist}(t,2^{-n}\mathbb Z)
 =\tfrac12\tau^{n+1}(t),\qquad t\in[0,1].
\]
It holds at dyadic endpoints as well, under the stated binary convention.
We prove $H_n(t)=h(R^n(t))$ by cases. If $t\in E_n$, then
\eqref{eq:seam-distance}--\eqref{eq:tent-distance} give
$\tau^{n+1}(t)<1/16$, hence $q_n(t)=0$. Moreover, $R^n(t)$ is within
$1/32$ of an endpoint of $[0,1]$, so the support assumption on $h$ gives
$h(R^n(t))=0$. The identity $P_{M_h}(0,y)=0$ proves the claim. If
$t\notin E_n$, equation \eqref{eq:exact-continuous-orbit} gives
$\widehat R_n^n(t)=R^n(t)$. When $q_n(t)=1$, the identity
$P_{M_h}(1,y)=y$ applies. When $q_n(t)<1$, one has
$\tau^{n+1}(t)<1/8$, so $R^n(t)$ is within $1/16$ of an endpoint and
$h(R^n(t))=0$; now $P_{M_h}(s,0)=0$ applies. Thus
$H_n=h\circ R^n$ everywhere, and in particular it vanishes on $E_n$.

Each one-step scalar map displayed above is CPwL with a fixed number of
pieces. Iterating
$\widehat R_n$ and $\tau$ costs $O(n)$ layers at fixed width; the fixed
CPwL function $h$ and the terminal gate add $O(1)$ layers. A signed copy of
the input is propagated through these layers using
$t=\rho(t)-\rho(-t)$. This proves all assertions.
\end{proof}

The selectors need only choose between two states when their value is zero
or one. At seams the state is already zero. This permits an exact ReLU gate
without multiplication.

\begin{lemma}
\label{lem:product-gate}
For $M>0$, $s\in[0,1]$, and $y\in\R^D$ with $\|y\|_\infty\le M$, define
\begin{equation}\label{eq:product-gate}
 \Pi_M(s,y):=-\rho(Ms\one-y)
 -\rho\bigl(M(1-s)\one-\rho(-y)\bigr)+M\one.
\end{equation}
Then $\Pi_M$ is realizable by a fixed-size ReLU module and
\[
 \Pi_M(1,y)=y,\qquad \Pi_M(0,y)=0,\qquad \Pi_M(s,0)=0.
\]
\end{lemma}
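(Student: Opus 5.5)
The gate acts coordinatewise, so I will reduce everything to the scalar map
\[
 \pi_M(s,y)=-\rho(Ms-y)-\rho\bigl(M(1-s)-\rho(-y)\bigr)+M,\qquad s\in[0,1],\ |y|\le M.
\]
This is the scalar $P_M$ already used in Lemma~\ref{lem:controller}.

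Realizability comes first and is immediate from the formula. The module takes the input $(s,y)\in\R^{1+D}$. Its first hidden layer computes the $2D$ quantities $\rho(Ms-y_i)$ and $\rho(-y_i)$, and it also carries $s$ forward, which is harmless since $s\ge0$ gives $\rho(s)=s$. A second hidden layer computes $\rho\bigl(M(1-s)-\rho(-y_i)\bigr)$ and passes the values $\rho(Ms-y_i)$ through unchanged, which is valid because they are nonnegative. An affine output layer then forms $-a_i-b_i+M$. The width is $O(D)$, the depth is $2$, and the parameter count is $O(D^2)$. None of these depend on $s$ or $y$, so the module has fixed size.

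The three identities are a case analysis using $|y|\le M$.

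For $s=1$ the second ReLU is $\rho(-\rho(-y))=0$. Since $M-y\ge0$, the first term is $\rho(M-y)=M-y$. The output is therefore $-(M-y)+M=y$.

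For $s=0$ the first term is $\rho(-y)$ and the second is $\rho(M-\rho(-y))=M-\rho(-y)$, because $\rho(-y)\le M$. The output is $-\rho(-y)-M+\rho(-y)+M=0$.

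For $y=0$ the output is $-\rho(Ms)-\rho(M(1-s))+M$. With $s\in[0,1]$ both arguments are nonnegative, so this equals $-Ms-M(1-s)+M=0$.

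No step is a real obstacle here. The only point needing care is to use the hypothesis $\|y\|_\infty\le M$ in exactly the two places above, namely $M-y\ge0$ and $\rho(-y)\le M$. For intermediate $s$ with $y\ne0$ the gate is not a product, and the lemma makes no claim about that case.
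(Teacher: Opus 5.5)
Your proposal is correct and follows essentially the same route as the paper: a coordinatewise case check of the three identities, using $|y_i|\le M$ exactly where you indicate, together with the same two-ReLU-layer realization. The only difference is cosmetic: you treat $s=0$ in one stroke via $\rho(-y_i)\le M$, whereas the paper splits on the sign of $y_i$; your layer-by-layer description of the module is slightly more explicit than the paper's.
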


\begin{proof}
The verification is coordinatewise. If $s=1$, then
$\rho(M-y_i)=M-y_i$ because $|y_i|\leq M$, while the second ReLU term is
zero. Hence the result is $y_i$. If $s=0$ and $y_i\geq0$, the two terms are
$0$ and $M$. If $s=0$ and $y_i<0$, they are $-y_i$ and $M+y_i$. Both
cases give zero after adding $M$. When $y_i=0$, the two ReLU terms are
$Ms$ and $M(1-s)$, which cancel the last term. The displayed formula uses
two ReLU layers and a number of units depending only on $D$.
\end{proof}

\subsection{State Recursion}

Fix an output coordinate $r\in\{1,\ldots,D\}$ and initialize
\[
 z_r^0(x):=H_n(x)e_r.
\]
The original input is still available after this initialization.
Set $u_0(x):=x$ and update $u_s=\widehat R_n(u_{s-1})$ in parallel with
the matrix state. Thus $u_{s-1}=\widehat R_n^{s-1}(x)$ is available at
step $s$ without retaining an entire digit sequence.
For $s=1,\ldots,n$, set
\begin{equation}\label{eq:state-recursion}
\begin{aligned}
 z_r^s={}&\Pi_M\left(
 \widehat\chi_{0,n}(\widehat R_n^{s-1}(x)),
 (T_0^{[\pi_s]})^Tz_r^{s-1}\right)\\
 &+\Pi_M\left(
 \widehat\chi_{1,n}(\widehat R_n^{s-1}(x)),
 (T_1^{[\pi_s]})^Tz_r^{s-1}\right).
\end{aligned}
\end{equation}
For a fixed finite list, all ideal intermediate states have finite norm, so
one may choose a common $M$ large enough for every gate. This choice changes
the weights but not the architecture.

Away from $E_n$, exactly one selector is one and the other is zero.
Lemma~\ref{lem:product-gate} therefore selects the corresponding matrix.
On $E_n$, the initial state is zero and the identity
$\Pi_M(s,0)=0$ keeps every later state zero. Induction gives, for all
$x\in[0,1]$,
\begin{equation}\label{eq:state-product}
 z_r^s(x)=h(R^n(x))
 \left(T_{B_1(x)}^{[\pi_1]}\cdots
 T_{B_s(x)}^{[\pi_s]}\right)^Te_r.
\end{equation}
The induction also confirms the order. At the second step,
$(T_2)^T(T_1)^T=(T_1T_2)^T$, so the recursion produces the product in
\eqref{eq:iclr-block}, not its reversal. Since $q^Tz_r^n$ is the
$r$th coordinate of the right-hand side of \eqref{eq:iclr-block}, all
coordinates of the block state are recovered exactly.

The dimension $D$ is fixed. Each update adds only fixed width and a fixed
number of layers. The terminal controller has depth $O(n)$ and the $n$ state
updates have depth $O(n)$. Running the $D$ coordinate recursions in parallel
preserves fixed width. If hidden states are required to be nonnegative,
each signed coordinate is stored as the difference of two nonnegative
channels, which changes the width only by a fixed factor.

\subsection{General Seeds}

For the special atom \eqref{eq:special-atom}, first extend the construction
from $[0,1]$ to all of $\R$. Since $h$ is supported in
$[1/8,7/8]$, it vanishes at every integer. If a function $f$ vanishes at
all integers, then
\[
 (V_kf)(m)=\sum_{j\in J}A_j^{[k]}f(2m-j)=0,
 \qquad m\in\mathbb Z.
\]
Every finite cascade of the special atom therefore vanishes at all integer
seams. Define
\[
 r_i(x):=\rho(x-\ell_--i+1)-\rho(x-\ell_--i),
 \qquad i=1,\ldots,L.
\]
On $[\ell_-+i-1,\ell_-+i]$,
$r_i(x)=x-\ell_--i+1$, while outside that interval it equals zero or one.
Because every block piece vanishes at both endpoints,
\[
 F_\pi(x)=\sum_{i=1}^L
 [G_{F_\pi}]_i(r_i(x)),\qquad x\in\R.
\]
All nonmatching pieces vanish automatically. Since $L$ is fixed, this
patching adds only fixed width and a fixed number of layers.

Now let $g$ be general. Take a common finite nodal partition containing the
breakpoints of every component of $g$, and refine it until adjacent nodes
are less than $3/8$ apart. Its nonnegative nodal hat functions
$\lambda_1,\ldots,\lambda_Q$ give
\begin{equation}\label{eq:hat-decomposition}
 g(x)=\sum_{\nu=1}^Q\lambda_\nu(x)v_\nu,
 \qquad v_\nu\in\R^p,
\end{equation}
where $Q$ is independent of $n$. Each hat has support of length less than
$3/4$, so there is a translation $\delta_\nu$ and a special atom
$h_\nu$ supported in $[1/8,7/8]$ such that
$\lambda_\nu=\tau_{\delta_\nu}h_\nu$, with
$(\tau_\delta f)(x)=f(x-\delta)$. A direct substitution into
\eqref{eq:iclr-cascade} shows
\begin{equation}\label{eq:translation-commutation}
 V_k\tau_\delta=\tau_{\delta/2}V_k.
\end{equation}
After $n$ levels, the cascade of $\lambda_\nu v_\nu$ is therefore the
already constructed global special-atom network with its input translated
by $\delta_\nu/2^n$. Run the fixed number $Q$ of networks in parallel and
sum their outputs according to \eqref{eq:hat-decomposition}. The resulting
network is exact, has fixed width, and has depth $O(n)$. This completes the
construction.

\begin{proof}[Proof of Theorem~\ref{thm:iclr-exact}]
The block identity \eqref{eq:iclr-block} writes every block of the cascade as the
ordered transition product in \eqref{eq:iclr-block}.
Lemma~\ref{lem:controller} supplies continuous residuals and selectors, and
Lemma~\ref{lem:product-gate} implements each binary matrix choice exactly.
The recursion \eqref{eq:state-recursion} preserves the matrix order and
recovers every coordinate by \eqref{eq:state-product}. Its width is fixed
and its depth is $O(n)$. The global patching and the finite hat
decomposition \eqref{eq:hat-decomposition} extend the construction from a
localized atom to the prescribed seed without changing these asymptotic
bounds. All constants depend only on $p$, $J$, and the fixed seed
decomposition. This proves every assertion of the theorem.
\end{proof}

\subsection{Depth Optimality}
\label{sec:obstruction}

\begin{proof}[Proof of Proposition~\ref{thm:iclr-depth-lower}]
Let
\[
 g(x)=\max\{1-|2x-1|,0\},
\qquad
 a_0^{[k]}=1,\qquad a_1^{[k]}=-\gamma_k,
\]
where $\gamma_k=1$ for odd $k$ and $\gamma_k=2$ for even $k$.  These are
bounded masks with common support $\{0,1\}$, and the sequence is genuinely
nonstationary.  Since $g$ is supported on $[0,1]$ and vanishes at both
endpoints,
\[
 (V_kf)(x)=f(2x)-\gamma_k f(2x-1)
\]
places two nonoverlapping, nonzero scaled copies of $f$ on the left and right
halves of $[0,1]$.  Induction shows that on every dyadic cell
$[m2^{-n},(m+1)2^{-n}]$, the function $F_n$ is a nonzero multiple of
$g(2^nx-m)$.  Each of the $2^n$ cell midpoints is therefore a genuine
breakpoint.  Hence $F_n$ has at least $2^n$ maximal affine pieces.

It remains to count pieces in a one-input ReLU network.  Suppose the joint
output of one layer is affine on each of $N$ intervals.  On each interval,
every one of the at most $W$ preactivations in the next layer is affine and
has at most one zero unless it vanishes identically.  Their union therefore
divides that interval into at most $W+1$ subintervals.  ReLU does not create
any other breakpoint.  Starting with the affine input and iterating over $L$
ReLU layers, the final affine output has at most $(W+1)^L$ affine pieces.
Exact equality with $F_n$ forces
$(W+1)^L\geq2^n$, which is
equation~\eqref{eq:iclr-depth-lower}.  At fixed width the dense parameter
count is at least a constant multiple of the number of nontrivial layers, so
it is also $\Omega(n)$.
\end{proof}

\subsection{Applications}
\label{sec:applications}

Algorithm~\ref{alg:certified-cascade} and
Theorem~\ref{thm:iclr-exact} turn a prescribed refinement rule into a
finite evaluator with a uniform error certificate. The following uses
require known masks and the uniform bounds specified in the algorithm;
they do not involve training a network.

\paragraph{Spline synthesis}
Consider the two-channel exponential-spline masks in
Proposition~\ref{prop:exponential-spline-certificate}.
The input consists of fixed exponents $\lambda_1,\lambda_2$, the hat seed
$g=(h,h)^T$, and a tolerance $\eps>0$. The formulas in that proposition
specify both the masks and the matching maps. The requested output is an
evaluator for their limiting profile $\Phi$, not merely values on a fixed
grid. This is a concrete nonstationary generator-synthesis problem within
the background of exponential-spline subdivision
\cite{JeongLeeYoon2013}.

Fix $q\in(1/2,1)$ and let a certified constant $D_\lambda$ satisfy
$\delta_k\leq D_\lambda 2^{-k}$ for every $k\geq1$.
Together with a certified tail constant $C_q$ from
Theorem~\ref{thm:iclr-computable-cascade}, this gives
\[
 \sum_{j=1}^n\delta_jq^{n-j}
 \leq \frac{D_\lambda}{2q-1}q^n,
 \qquad
 \sum_{j>n}\delta_j\leq D_\lambda 2^{-n}\leq D_\lambda q^n.
\]
Thus a directly usable tolerance bound is
\begin{equation}
 \|F_n-\Phi\|_\infty\leq\widetilde C_q q^n,
 \qquad
 \widetilde C_q:=C_q\left(1+\frac{D_\lambda}{2q-1}+D_\lambda\right).
 \label{eq:application-spline-tail}
\end{equation}
For example, it suffices to select
\begin{equation}
 n_\eps=\max\left\{1,
 \left\lceil\frac{\log(\widetilde C_q/\eps)}{\log(q^{-1})}\right\rceil
 \right\}.
 \label{eq:application-depth}
\end{equation}
The algorithm constructs the frames and checks the defect envelope at the
required levels, then compiles the ordered cascade into a network
$\mathcal N_\eps=F_{n_\eps}$. It returns
$\|\mathcal N_\eps-\Phi\|_\infty\leq\eps$.
The geometric-series estimate above is a consequence of the existing tail
theorem; it is not a separate convergence assumption.

The constants in \eqref{eq:application-depth} must include the finite
initial levels and an enclosure of the infinite tail. The asymptotic
statements $\rho_{\rm ch}\leq1/2$ and $\delta_k=O(2^{-k})$ establish the
rate but do not, by themselves, supply numerical stopping constants.
For this reason, the SVD calculations and the convergence plot in
Section~\ref{sec:iclr-diagnostics} illustrate the construction rather than
replace its spectral certificate. The application uses convergence of the
specified masks; it does not assert additional interpolation or exact
exponential-reproduction properties for their normalization.

\paragraph{Network evaluation}
Once the constants are supplied, \eqref{eq:application-depth} gives
$n_\eps=O(1+\log\eps^{-1})$ as $\eps\downarrow0$.
The resulting network has fixed width and
$O(1+\log\eps^{-1})$ depth and affine parameters. Its weights are
constructed from the masks, not fitted to samples. At fixed width, each
query of the generator requires $O(n_\eps)$ real-arithmetic operations.
This is useful when the same profile is evaluated repeatedly at selected
locations: the representation executes the refinement rule instead of
storing all pieces of its fine-resolution output. It extends the
refinable-function representation viewpoint of
\cite{DaubechiesEtAl2023} to the ordered matrix setting.
The count concerns real arithmetic and representation size, not uniformly
bounded weights, floating-point bit complexity, or a measured speedup over
an optimized subdivision implementation. Producing many output samples
still requires evaluating those samples.

\paragraph{Signal synthesis}
The generator error also controls finite reconstructions with known data.
For prescribed shifts $\xi_1,\ldots,\xi_M$ and coefficient vectors
$b_1,\ldots,b_M\in\R^p$, define
\[
 u(x)=\sum_{i=1}^M b_i^T\Phi(x-\xi_i),
 \qquad
 u_n(x)=\sum_{i=1}^M b_i^TF_n(x-\xi_i),
 \qquad A_{\rm syn}=\sum_{i=1}^M\|b_i\|_1.
\]
Translation invariance of the uniform norm and the triangle inequality give
\begin{equation}
 \|u-u_n\|_\infty
 \leq A_{\rm syn}\|\Phi-F_n\|_\infty
 \leq A_{\rm syn}\widetilde C_q q^n
 \label{eq:application-synthesis-error}
\end{equation}
for the spline family above. For other families, the same first inequality
applies with the general tail bound \eqref{eq:iclr-computable-tail}.
If $A_{\rm syn}>0$, a target signal error $\eps$ is therefore obtained by
calling Algorithm~\ref{alg:certified-cascade} with generator tolerance
$\eps/A_{\rm syn}$; if $A_{\rm syn}=0$, the signal is zero.
Translated copies of the compiled generator and a final linear
combination evaluate $u_n$. For fixed $M$, this preserves fixed width and
linear depth in the truncation level, with constants allowed to depend
on $M$. The application is evaluation of an already specified synthesis
formula. Recovering its coefficients or shifts from samples, proving a
stable basis property, and controlling derivative errors require
additional results and are not asserted here.

\section{Metric Entropy}
\label{sec:iclr-bits}

A second application is storage or transmission of known multiscale
profiles. Instead of storing all sampled values on a fine grid, an encoder
quantizes the generator data and a shared decoder reconstructs the profile.
Here this task is studied for the explicitly constructed two-channel
classes below: the encoder is given their source coefficients, not an
arbitrary observed signal. Metric entropy then identifies the minimum
fixed-length description needed for a prescribed worst-case error.

For a compact function class $\mathcal C$, let $N(\eps,\mathcal C)$ be its
covering number by closed uniform-norm balls of radius $\eps$, with
arbitrary bounded-function centers, and set
$H_\eps(\mathcal C)=\log_2N(\eps,\mathcal C)$.
A deterministic fixed-length code of worst-case distortion $\eps$ needs
at least $\lceil H_\eps(\mathcal C)\rceil$ bits.
The decoder, contraction data, and requested accuracy are shared
information, not charged per target. We identify this budget inside an
exactly matched cascade class and, for the stated rational data, construct
finite rational ReLU reconstructions attaining it. The result concerns
transmitted generator bits, not a listing of every assembled network
parameter or a complete file format. It is not a universal compression
claim for arbitrary signals. In the subsection titles, fixed and variable
rates refer to the contraction factors across levels; the source codes
remain fixed-length at each requested accuracy.

\subsection{Fixed Rates}
\label{sec:sharp-coding}

\begin{theorem}
\label{thm:stable-entropy}
Fix $a\in(1/2,1)$, $h(x)=(1-|x-1|)_+$, $g=(0,h)^T$, and
$t=(t_k)_{k\ge1}\in[0,1]^\N$.  Let
\begin{equation}
 A_0^{[k]}=\begin{pmatrix}a&0\\0&1/2\end{pmatrix},\qquad
 A_1^{[k]}=\begin{pmatrix}0&0\\0&1\end{pmatrix},\qquad
 A_2^{[k]}=\begin{pmatrix}0&t_k\\0&1/2\end{pmatrix}.
\label{eq:stable-source-masks}
\end{equation}
Then, uniformly in $t$,
\begin{enumerate}[label=(\roman*),leftmargin=1.7em,itemsep=2pt,topsep=2pt]
\item
\begin{equation}
 \Phi_t=(\phi_t,h)^T,\qquad
 \phi_t(x)=\sum_{k\ge1}a^{k-1}t_kh(2^kx-2),\qquad
 \|V_1\cdots V_ng-\Phi_t\|_\infty\le a^n,
\label{eq:stable-source-expansion}
\end{equation}
with $\delta_k=0$ and $\rho_{\rm ch}=a$ for $P=(0,1,0,1)$.
\item For $\mathcal C_a:=\{\phi_t:t\in[0,1]^\N\}$,
\begin{equation}
 \|\phi_t-\phi_s\|_\infty
 =\sup_{k\ge1}a^{k-1}|t_k-s_k|,
\label{eq:stable-source-isometry}
\end{equation}
and
\begin{equation}
 H_\eps(\mathcal C_a)
 =\frac{(\log_2\eps^{-1})^2}{2\log_2 a^{-1}}
 +O_a(\log\eps^{-1}),\qquad \eps\downarrow0.
\label{eq:stable-source-entropy}
\end{equation}
The vector class $\{\Phi_t\}$ has the same entropy.
\item If $a\in\mathbb Q$, there is a dyadic code with distortion $\le\eps$ and
\begin{equation}
 B_\eps\le
 \frac{(\log_2\eps^{-1})^2}{2\log_2 a^{-1}}
 +O_a(\log\eps^{-1}),
\label{eq:stable-source-code}
\end{equation}
whose rational ReLU decoders have width $O(1)$ and
$O_a(\log\eps^{-1})$ depth and parameters; the leading coefficient is
optimal among deterministic decoders.
\end{enumerate}
\end{theorem}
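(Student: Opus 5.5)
The plan is to compute the cascade explicitly from the triangular form of the masks, reduce the entropy question to a weighted $\ell^\infty$ box through an exact isometry, and then obtain the code by quantizing finitely many coordinates and compiling with Theorem~\ref{thm:iclr-exact}.

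\emph{Part (i).} The second channel of every $A_j^{[k]}$ is the scalar mask $(1/2,1,1/2)$, and this mask refines $h$ exactly. Moreover, the second channel never receives input from the first. Hence the second component of $V_m\cdots V_n g$ equals $h$ for every $m\le n$.

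The first component obeys $(V_kf)_1(x)=a f_1(2x)+t_k f_2(2x-2)$. An induction on the number of outer operators, peeling off $V_1$ last so that the stated order is respected, gives
\[
 (V_1\cdots V_ng)_1(x)=\sum_{k=1}^n a^{k-1}t_k h(2^kx-2).
\]
The $k$th summand is supported in $[2^{1-k},2^{2-k}]$, and these supports have pairwise disjoint interiors. Consequently the uniform error against the full series is $\sup_{k>n}a^{k-1}t_k\le a^n$.

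For the matching data I use $P=(0,1,0,1)$ on a two-cell window. Partition of unity for the second channel gives $PT_\epsilon^{[k]}=P$. The seed defect also vanishes: $V_kg-g=(t_kh(2\cdot-2),0)^T$ has zero second channel, and $P$ only reads the second channel, so $\delta_k=0$.

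On $\ker P$ the restricted transitions are block triangular. The first-channel block is built from $a$ and $0$, and the second-channel difference is multiplied by $1/2$. The off-diagonal coupling comes from the bounded entries $t_k$, so summing the resulting geometric series gives $\chi_m\lesssim a^m$. Evaluating on a first-channel vector gives $\chi_m\ge c\,a^m$. Hence $\rho_{\rm ch}=\max\{a,1/2\}=a$.

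\emph{Part (ii).} The disjoint supports give the isometry \eqref{eq:stable-source-isometry}. The upper bound $\le$ is immediate. For $\ge$, evaluate at the peak $x=3\cdot2^{-k}$ of the $k$th bump, where only the $k$th term is nonzero. Thus $\mathcal C_a$ is isometric to the box $\prod_k[0,a^{k-1}]$ in $\ell^\infty$.

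\emph{Upper bound on $H_\eps$.} Cover coordinate $k$ by $\lceil a^{k-1}/(2\eps)\rceil$ intervals of radius $\eps$. Coordinates with $a^{k-1}\le2\eps$ are covered by the single midpoint. Pulling these centers back through the series gives elements of the class as centers.

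\emph{Lower bound on $H_\eps$.} Centers are arbitrary bounded functions, not necessarily elements of $\mathcal C_a$. I will therefore not rely on the exact covering formula of \cite[Thm.~18]{AllardBolcskei2026Exact} alone. Instead I use a packing argument. Take a product grid in $t$ whose points are strictly more than $2\eps$ apart in the weighted metric. Any uniform ball of radius $\eps$ contains at most one grid function, and its size is $\prod_{k\le K}\lfloor a^{k-1}/(2\eps)\rfloor$ up to harmless $\pm1$ adjustments.

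\emph{Asymptotics.} Both bounds have the form $\sum_{k\le K}\bigl(\log_2(1/2\eps)-(k-1)\log_2a^{-1}\bigr)+O(K)$ with $K=\log_2(1/2\eps)/\log_2a^{-1}$. The main part is a triangular sum of size $K^2\log_2a^{-1}/2$, and the rounding and $\pm1$ corrections contribute $O(K)=O(\log\eps^{-1})$. This yields \eqref{eq:stable-source-entropy}. The vector class has the same entropy because its second component is the constant function $h$.

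\emph{Part (iii).} Choose $K$ with $a^K\le\eps/2$, so $K=O_a(\log\eps^{-1})$. For $k\le K$, quantize $t_k$ to a dyadic grid of mesh $\le\eps a^{1-k}$. This costs $\log_2(a^{k-1}/\eps)+O(1)$ bits per coordinate, and by the same triangular sum the total is \eqref{eq:stable-source-code}.

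The decoder sets the quantized $\tilde t_k$ for $k\le K$ and compiles the resulting $K$-level cascade with Theorem~\ref{thm:iclr-exact}. This gives width $O(1)$ and depth and parameters $O_a(\log\eps^{-1})$. All weights are rational: $a\in\mathbb Q$, the $\tilde t_k$ are dyadic, and the gate constants in Lemmas~\ref{lem:controller}--\ref{lem:product-gate} are rational.

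The distortion is at most $\eps/2$ from quantization plus at most $a^K\le\eps/2$ from truncation by part (i), for a total of at most $\eps$. Optimality of the leading coefficient follows because any deterministic fixed-length code with distortion $\eps$ needs at least $\lceil H_\eps\rceil$ bits.

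I expect the main obstacle to be making the entropy lower bound rigorous against arbitrary bounded-function centers, with an $O(\log\eps^{-1})$ error that is uniform in the rounding. The packing argument through the exact isometry is the route I would try first.
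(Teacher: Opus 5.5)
Your proposal is correct and follows the paper's proof essentially step for step. The shared steps are: the explicit triangular computation of the cascade and of the restricted transitions on $\ker P$ (giving $\delta_k=0$ and $\rho_{\rm ch}=a$); the peak-evaluation isometry onto a weighted $\ell^\infty$ box; a covering/packing sandwich that is valid against arbitrary bounded centers, which the paper carries out exactly in Theorem~\ref{thm:exact-source-code} with $\lceil w_k/(2\eps)\rceil$ packing points where you use $\lfloor\cdot\rfloor$; and a truncated dyadic quantization compiled by Theorem~\ref{thm:iclr-exact}. The remaining differences are cosmetic and affect only the $O_a(\log\eps^{-1})$ term; for instance, you add the quantization and truncation errors, whereas the paper uses the sup structure of \eqref{eq:stable-source-isometry} so that each may be as large as $\eps$.
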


\begin{proof}
Let $H$ denote scalar refinement with mask $(1/2,1,1/2)$.  The elementary
hat identity $Hh=h$ gives
\[
 V_k(u,v)(x)=\bigl(a u(2x)+t_kv(2x-2),\,Hv(x)\bigr)^T.
\]
Induction, with the rightmost operator acting first, shows that the first
coordinate of $V_1\cdots V_ng$ is the first $n$ terms of
equation~\eqref{eq:stable-source-expansion}, and its second coordinate is
$h$.  The $k$th hat is supported on
$[2^{1-k},2^{2-k}]$ and attains one at $x_k=3\,2^{-k}$.
These intervals have disjoint interiors, and all hats vanish at their shared
endpoints.  The truncation error is consequently at most $a^n$, and the
uniform limit is continuous, including at the accumulation point zero.
Evaluation at $x_k$ and the disjoint interiors give
equation~\eqref{eq:stable-source-isometry}.

We next verify that this source belongs to the stable class, rather than
merely having a convergent explicit formula.  Its two transitions are
\[
 T_0^{[k]}=\begin{pmatrix}A_0^{[k]}&0\\A_2^{[k]}&A_1^{[k]}\end{pmatrix},
 \qquad
 T_1^{[k]}=\begin{pmatrix}A_1^{[k]}&A_0^{[k]}\\0&A_2^{[k]}\end{pmatrix}.
\]
Direct multiplication gives $PT_\epsilon^{[k]}=P$.  Since $Hh=h$, the
seed quotient defect also vanishes.  In fixed coordinates
$(u_0,u_1,z)$ on $\ker P$, represented in the original block order by
$(u_0,z,u_1,-z)$, the restrictions are
\begin{equation}
 C_{k,0}=\begin{pmatrix}a&0&0\\0&0&t_k\\0&0&1/2\end{pmatrix},\qquad
 C_{k,1}=\begin{pmatrix}0&a&0\\0&0&-t_k\\0&0&1/2\end{pmatrix}.
\label{eq:stable-source-restrictions}
\end{equation}
Their upper-left two-dimensional blocks have infinity norm at most $a$,
and the upper-right columns have norm at most one.  Every product of
length $m$ therefore has diagonal blocks bounded by $a^m$ and $2^{-m}$,
and its off-diagonal block is bounded by
\[
 \sum_{j=0}^{m-1}a^j2^{-(m-1-j)}
 =\frac{a^m-2^{-m}}{a-1/2}.
\]
This gives a uniform $O_a(a^m)$ product bound.  Conversely, the all-zero
digit product maps $(1,0,0)^T$ to $a^m(1,0,0)^T$, so
$\rho_{\rm ch}=a$.  Changing to the orthonormal frame used in
Theorem~\ref{thm:iclr-computable-cascade} does not change this radius.
The fixed matching matrix has singular value $\sqrt2$, verifying all of
that theorem's matching hypotheses.

Compactness follows from the compact product cube and the uniformly
vanishing coordinate tails.  Appending the same coordinate $h$ preserves
all distances.  The entropy assertion follows from the exact covering
calculation in Theorem~\ref{thm:exact-source-code} below, whose proof uses
only the established isometry.

For the dyadic code, set $w_k=a^{k-1}$, $X=\log_2\eps^{-1}$,
$\lambda=\log_2a^{-1}$, and $m=\lceil X/\lambda\rceil$, and set
$\widehat t_k=0$ for $k>m$.  For $1\leq k\leq m$, use
\[
 b_k=\left\lceil\log_2\frac{w_k}{\eps}\right\rceil,
 \qquad
 \widehat t_k=2^{-b_k}\lfloor2^{b_k}t_k\rfloor.
\]
There are $2^{b_k}+1$ possible values, including one, so each coordinate
can be transmitted using $b_k+1$ bits.  The retained coordinates have
$w_k|t_k-\widehat t_k|\leq\eps$, and the discarded coordinates have
weight at most $a^m\leq\eps$.  Equation~\eqref{eq:stable-source-isometry}
gives distortion at most $\eps$, without summing errors over the scales.
Moreover,
\[
 \sum_{k=1}^m(b_k+1)
 \leq mX-\frac{\lambda m(m-1)}2+2m
 =\frac{X^2}{2\lambda}+O_a(X).
\]
This proves the stated dyadic code length.  Finally, the
decoded first $m$ masks and seed are rational when $a$ is rational.
Theorem~\ref{thm:iclr-exact} then supplies the claimed exact network for
the quantized finite cascade.  The fixed decoder regenerates its repeated
coefficients; the leading coefficient above refers to the transmitted
generator bits, not to a listing of all assembled network parameters.
\end{proof}

Entropy from decaying coordinates belongs to the classical framework
\cite{KolmogorovTikhomirov1959,AllardBolcskei2024Entropy}.  In particular,
the exact hyperrectangle covering identity is established in
\cite[Thm.~18]{AllardBolcskei2026Exact}.  We apply this identity to the
weighted-coordinate geometry above.  For rational $a$, its optimal covering
centers can be chosen as finite rational cascades and hence finite ReLU
networks at every fixed accuracy.  This construction uses triangular
matrices and does not require irreducibility.

\begin{theorem}
\label{thm:exact-source-code}
For $\mathcal C_a$ in Theorem~\ref{thm:stable-entropy}, set
\[
 w_k=a^{k-1},\qquad n_k=\left\lceil\frac{w_k}{2\eps}\right\rceil.
\]
Then, for every $\eps>0$,
\begin{equation}
 N(\eps,\mathcal C_a)=\prod_{k\ge1}n_k,
 \qquad
 B_\eps^*=\left\lceil\sum_{k\ge1}\log_2 n_k\right\rceil,
\label{eq:exact-cover-code}
\end{equation}
where $B_\eps^*$ is the minimum deterministic fixed-length budget and only
finitely many $n_k$ exceed $1$.  If $a\in\mathbb Q$, $B_\eps^*$ is attained by a
mixed-radix code with rational ReLU reconstructions of width $O(1)$ and
$O_a(1+\log_+\eps^{-1})$ depth and parameters, where
$\log_+u:=\max\{\log_2u,0\}$.

For $X:=\log_2\eps^{-1}>1$ and $\lambda:=\log_2a^{-1}$,
\begin{equation}
 H_\eps(\mathcal C_a)
 =\frac{X^2}{2\lambda}
 +\left(\frac12-\frac1\lambda\right)X+O_a(1),
\label{eq:second-order-entropy}
\end{equation}
and
\begin{equation}
 0\leq H_\eps(\mathcal C_a)
       -\frac{(X-1)^2}{2\lambda}-\frac{X-1}{2}
 \leq \frac{\lambda}{8}+\frac{1}{(1-a)\ln2}.
\label{eq:entropy-remainder}
\end{equation}
\end{theorem}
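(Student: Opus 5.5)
The plan is to reduce everything to the weighted sup-norm cube via the isometry \eqref{eq:stable-source-isometry}. That identity identifies $\mathcal C_a$ isometrically with the box
\[
Q=\{(w_kt_k)_{k\ge1}: t_k\in[0,1]\}\subset\ell^\infty,\qquad w_k=a^{k-1}.
\]
Covering $\mathcal C_a$ by uniform balls with arbitrary bounded centers is then the same as covering the hyperrectangle $\prod_k[0,w_k]$ in $\ell^\infty$. One subtlety is that centers in the ambient function space must be projected. I will handle this with the nonexpansive map $f\mapsto(f(x_k))_k$, where $x_k=3\,2^{-k}$, followed by coordinatewise clipping to $[0,w_k]$. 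This map does not increase sup distances to points of $\mathcal C_a$, so optimal covers may be assumed to live in $Q$.

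For the upper bound, I cover each interval $[0,w_k]$ by $n_k=\lceil w_k/(2\eps)\rceil$ closed intervals of radius $\eps$, and take the product grid. Since $w_k\le2\eps$ forces $n_k=1$, only finitely many factors exceed one. This gives $N\le\prod n_k$. For the lower bound I invoke the exact hyperrectangle identity \cite[Thm.~18]{AllardBolcskei2026Exact}. As a self-contained check, I can pack $\prod_k(n_k)$ points whose coordinates are spaced strictly more than $2\eps$ apart in each nontrivial coordinate; one needs $n_k-1$ gaps with $(n_k-1)\cdot 2\eps<w_k$, which is exactly the ceiling definition. No ball of radius $\eps$ can contain two such points. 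The formula for $B_\eps^*$ then follows because a fixed-length code with distortion $\eps$ is equivalent to a cover of cardinality at most $2^B$.

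For the mixed-radix code with rational $a$, I take rational cell midpoints $\widehat t_k$ for $k\le m$, where $m=\max\{k:n_k>1\}=O_a(1+\log_+\eps^{-1})$. The index is encoded in mixed radix $(n_1,\dots,n_m)$, giving exactly $\lceil\sum\log_2 n_k\rceil$ bits. The decoder is the finite cascade with masks $t_k\mapsto\widehat t_k$ over $m$ levels. Theorem~\ref{thm:iclr-exact} then gives an exact rational ReLU network of width $O(1)$ and depth $O(m)$.

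The asymptotics are the main computational step. Write $\log_2 n_k=\log_2(w_k/2\eps)+\theta_k$, where $\theta_k\in[0,1)$ is the ceiling correction, for the indices with $w_k>2\eps$, i.e. $(k-1)\lambda<X-1$. Summing the affine part $X-1-(k-1)\lambda$ over this range gives $(X-1)^2/(2\lambda)+(X-1)/2$. The endpoint error is bounded through a Riemann-sum/trapezoid comparison, whose fractional-part defect is at most $\lambda/8$. The ceiling corrections satisfy $\log_2\lceil y\rceil-\log_2 y\le\log_2(1+1/y)$. Summed over $y=w_k/2\eps$, which grows geometrically as $k$ decreases, this is at most $\sum_j a^j/\ln2=1/((1-a)\ln2)$. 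Together these give \eqref{eq:entropy-remainder}.

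Expanding $(X-1)^2/(2\lambda)+(X-1)/2$ yields $X^2/(2\lambda)+(1/2-1/\lambda)X+O(1)$, which is \eqref{eq:second-order-entropy}. I expect the delicate part to be the sharp lower constant $0$. It requires showing that the trapezoid discrepancy and the ceiling terms are nonnegative together. I would verify this by writing the sum exactly as an arithmetic progression truncated at a fractional endpoint $\{(X-1)/\lambda\}$ and optimizing over that fraction.
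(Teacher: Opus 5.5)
Your counting argument is the paper's (packing with spacings $w_k/(n_k-1)>2\eps$, product of cells, mixed-radix ranking), and your remainder analysis matches as well. The attainment step, however, fails as written. You truncate the decoder at $m=\max\{k:n_k>1\}$, the last level with $w_k>2\eps$, so the $m$-level cascade has $\widehat t_k=0$ for every $k>m$.

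Since $a>1/2$, $w_{m+1}=a\,w_m>2a\eps>\eps$. For a source with $t_{m+1}=1$, the weighted isometry then gives distortion at least $w_{m+1}>\eps$. For $\eps\in[1/2,1)$ no $n_k$ exceeds $1$, so your code outputs $0$ and the error can reach $1$.

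Your product-grid cover is correct, but only because in the trivial coordinates with $\eps<w_k\le2\eps$ it places a nonzero center in $[w_k-\eps,\eps]$. These are exactly the levels your finite cascade throws away. So the cover you count and the networks you build are not the same codebook, and the claim that $B_\eps^*$ is attained by finite ReLU decoders is not established.

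The repair is the paper's choice $m=\min\{j\ge0:w_{j+1}\le\eps\}$. Keep every level with $w_k>\eps$, and on the retained levels with $n_k=1$ use the shared midpoint $\widehat t_k=1/2$, which costs zero bits. Discard only weights $w_k\le\eps$. This still gives exactly $\prod_kn_k$ rational finite-support centers and adds only $O_a(1)$ levels, so the width and depth bounds survive.

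Separately, the lower constant $0$ in the remainder bound is not delicate. With $Y=X-1$, $M=\lceil Y/\lambda\rceil$ and $\theta=M-Y/\lambda\in[0,1)$, one has exactly $\sum_{j=0}^{M-1}(Y-j\lambda)=Y^2/(2\lambda)+Y/2+\tfrac{\lambda}{2}\theta(1-\theta)$. Both the last term and every ceiling correction $\log_2\lceil r\rceil-\log_2 r$ are nonnegative termwise.
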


\begin{proof}
The covering argument applies to any positive decreasing weights tending
to zero in \eqref{eq:stable-source-isometry}.  For each $n_k\geq2$, take
the $n_k$ points $j/(n_k-1)$, $0\leq j<n_k$, and set the other coordinates
to zero.  Their Cartesian product has $\prod_k n_k$ elements.  Distinct
elements differ in some coordinate by weighted distance at least
$w_k/(n_k-1)>2\eps$.  No radius-$\eps$ ball, even with an ambient center,
can contain two of them.

For attainment, let $m=\min\{j\geq0:w_{j+1}\leq\eps\}$ and set
\begin{equation}
 j_k=\min\{\lfloor n_kt_k\rfloor,n_k-1\},\qquad
 \widehat t_k=\frac{2j_k+1}{2n_k}\quad(k\leq m),\qquad
 \widehat t_k=0\quad(k>m).
\label{eq:midpoint-code}
\end{equation}
Retained coordinates have error at most $w_k/(2n_k)\leq\eps$.
In particular, the coordinates with $\eps<w_k\leq2\eps$ use the shared
midpoint $1/2$ and cost no bits.  The discarded tail has weight at most
$\eps$.  The isometry therefore gives distortion at most $\eps$ with
exactly $\prod_k n_k$ finite-support centers.  Ranking the digits $j_k$
in their mixed radices gives \eqref{eq:exact-cover-code}, including a
zero-bit code when the product is one.  For rational $a$, these centers
are rational finite cascades of length $m=O_a(1+\log_+\eps^{-1})$.
The exact compiler gives the claimed networks.  For an algorithm using
exact comparisons, the shared accuracy may be taken rational.  For arbitrary
real accuracy the codebook assertion is existential, with the finitely
many integers $m,n_k$ fixed as shared decoder data.

To obtain the remainder, set $Y=X-1$, $M=\lceil Y/\lambda\rceil$, and
$\theta=M-Y/\lambda\in[0,1)$.  The active ratios are
$r_j=2^{Y-j\lambda}>1$, $0\leq j<M$.  Consequently
\[
 H_\eps=MY-\frac{\lambda M(M-1)}2+R_\eps
 =\frac{Y^2}{2\lambda}+\frac Y2
    +\frac\lambda2\theta(1-\theta)+R_\eps.
\]
Since $0\leq\log_2\lceil r\rceil-\log_2r
\leq1/(r\ln2)$ and $\sum_{j=0}^{M-1}r_j^{-1}\leq(1-a)^{-1}$,
we have $0\leq R_\eps\leq((1-a)\ln2)^{-1}$.  This proves both
\eqref{eq:second-order-entropy} and \eqref{eq:entropy-remainder}.
\end{proof}

The exact budget excludes headers and assembled-parameter listings.
The dyadic code above attains the leading term but can exceed the exact
mixed-radix budget by $O_a(X)$ bits.

\subsection{Variable Rates}
\label{sec:variable-rates}

The fixed contraction factor can be replaced by a prescribed clock of
contraction rates.  This extension distinguishes uniform stability over all
starting levels from the accumulation of visible scales at the initial level.

\begin{theorem}
\label{thm:variable-rate-source}
Let $\boldsymbol a=(a_k)_{k\ge1}$ satisfy
$1/2<\underline a\le a_k\le\overline a<1$.  Replace $a$ by $a_k$ in
\eqref{eq:stable-source-masks} and set
\[
 w_k:=\prod_{i=1}^{k-1}a_i,\qquad
 L_j:=\sum_{i=1}^j\log_2a_i^{-1},\qquad L_0:=0,
\]
\[
 \phi_t(x):=\sum_{k\ge1}w_kt_kh(2^kx-2),\qquad
 \Phi_t:=(\phi_t,h)^T,\qquad
 \mathcal C_{\boldsymbol a}:=\{\phi_t:t\in[0,1]^\N\}.
\]
Then
\begin{equation}
 \|V_1\cdots V_ng-\Phi_t\|_\infty\le w_{n+1},\qquad
 \|\phi_t-\phi_s\|_\infty=\sup_k w_k|t_k-s_k|,
\label{eq:variable-isometry}
\end{equation}
and
\begin{equation}
 \rho_{\rm ch}=2^{-\lambda_{\rm u}},\qquad
 \lambda_{\rm u}:=\lim_{m\to\infty}\inf_{r\ge1}
 \frac1m\sum_{i=r}^{r+m-1}\log_2a_i^{-1},
 \qquad
 \rho_{\rm ch}\le\overline a<1.
\label{eq:variable-radius}
\end{equation}
For $X:=\log_2\eps^{-1}>1$,
\begin{equation}
 H_\eps(\mathcal C_{\boldsymbol a})
 =\sum_{j\ge0}(X-1-L_j)_++R(X),\qquad
 0\le R(X)\le\frac1{(1-\overline a)\ln2}.
\label{eq:variable-entropy-sum}
\end{equation}
Moreover,
\begin{equation}
 \frac{L_n}{n}\to\lambda
 \quad\Longrightarrow\quad
 H_\eps(\mathcal C_{\boldsymbol a})
 =\frac{X^2}{2\lambda}+o(X^2),
\label{eq:variable-entropy-leading}
\end{equation}
with remainder $O(X)$ if $\sup_n|L_n-\lambda n|<\infty$.  If
$\boldsymbol a\subset\mathbb Q$ is generated by a fixed algorithm, then at shared
dyadic accuracies the optimal budget $\lceil H_\eps\rceil$ is attained by
rational ReLU decoders of width $O(1)$ and depth/parameter count $O(X)$.
\end{theorem}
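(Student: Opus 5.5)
The plan is to follow the proof of Theorem~\ref{thm:stable-entropy} with the level-dependent factor $a_k$ in place of $a$, and then read off the entropy from the covering argument of Theorem~\ref{thm:exact-source-code}.

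\emph{Step 1: cascade formula and isometry.} With $H$ scalar refinement by $(1/2,1,1/2)$ and $Hh=h$, one computes
\[
V_k(u,v)(x)=\bigl(a_ku(2x)+t_kv(2x-2),\,Hv(x)\bigr)^T .
\]
Induction, with the rightmost operator acting first, shows that the first coordinate of $V_1\cdots V_ng$ is $\sum_{k\le n}w_kt_kh(2^kx-2)$. The hats have disjoint interiors and each has height one, so the truncation error is at most $\sup_{k>n}w_k=w_{n+1}$, because the weights $w_k$ are decreasing. Evaluating at $x_k=3\cdot2^{-k}$ gives the isometry \eqref{eq:variable-isometry}.

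\emph{Step 2: the chronological radius.} The restrictions are \eqref{eq:stable-source-restrictions} with $a$ replaced by $a_k$. A product of length $m$ starting at level $r$ has three blocks:
\begin{itemize}[leftmargin=1.8em,itemsep=2pt,topsep=2pt]
\item an upper-left block of norm at most $\prod_{i=r}^{r+m-1}a_i$, with equality attained by the all-zero digit word;
\item a lower-right entry $2^{-m}$;
\item an off-diagonal block bounded by $\sum_j\bigl(\prod_{i<r+j}a_i\bigr)2^{-(m-1-j)}$. Since $a_i\ge\underline a>1/2$, this is at most $C\prod_{i=r}^{r+m-1}a_i$ with $C=(1-1/(2\underline a))^{-1}$.
\end{itemize}
Hence
\[
\chi_m\asymp\sup_r\prod_{i=r}^{r+m-1}a_i=2^{-\inf_r\sum_{i=r}^{r+m-1}\log_2 a_i^{-1}} .
\]
Taking $m$th roots, $\rho_{\rm ch}=2^{-\lambda_{\rm u}}$. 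The limit defining $\lambda_{\rm u}$ exists by superadditivity of $m\mapsto\inf_r\sum_{i=r}^{r+m-1}\log_2a_i^{-1}$, which is the logarithmic form of the submultiplicativity in Proposition~\ref{prop:chronological-radius}. Since every $a_i\le\overline a$, we get $\rho_{\rm ch}\le\overline a$. The matching facts $PT^{[k]}_\epsilon=P$ and zero seed defect carry over verbatim.

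\emph{Step 3: entropy.} The covering and packing argument of Theorem~\ref{thm:exact-source-code} uses only positive decreasing weights tending to zero. It therefore gives $N(\eps)=\prod_k\lceil w_k/(2\eps)\rceil$. Write $r_j=2^{X-1-L_j}=w_{j+1}/(2\eps)$. Then
\[
\log_2\lceil r_j\rceil=(X-1-L_j)+\bigl(\log_2\lceil r_j\rceil-\log_2 r_j\bigr)\quad\text{for } r_j>1,
\]
and the term is $0$ for $r_j\le1$. The first part sums to $\sum_j(X-1-L_j)_+$. The correction terms are each at most $1/(r_j\ln2)$, and the $r_j$ along active indices increase at least geometrically with ratio $\overline a^{-1}$ going backward from the smallest active index. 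So $\sum r_j^{-1}\le(1-\overline a)^{-1}$, which gives the bound on $R(X)$.

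For \eqref{eq:variable-entropy-leading}, write $L_j=\lambda j+o(j)$. The active range is $j\lesssim X/\lambda$, so the sum is a Riemann-type sum of $(X-\lambda j)_+$, which equals $X^2/(2\lambda)+o(X^2)$. If $|L_j-\lambda j|\le B$, then the active range and each summand shift by $O(1)$, giving an $O(X)$ remainder.

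\emph{Step 4: codes.} Use the mixed-radix midpoint code \eqref{eq:midpoint-code}, which retains $m=O(X)$ coordinates because $w_k\le\overline a^{k-1}$. The decoded $\widehat t_k$ are rational at dyadic $\eps$, and the masks $a_k$ are rational and algorithmically generated. Theorem~\ref{thm:iclr-exact} then compiles the length-$m$ cascade into a width-$O(1)$, depth-$O(X)$ network.

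I expect the main obstacle to be Step~2. One must make the off-diagonal accumulation uniform in the starting level, and must check that the sup over words and starting levels is attained by the zero word, so that the lower bound matches. The Riemann-sum estimate in Step~3 under mere convergence $L_n/n\to\lambda$ needs only care.
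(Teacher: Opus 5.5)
Your proposal is correct and follows the paper's proof essentially step by step. The shared steps are: the refinement identity with disjoint hats for the isometry; the three-block product bound, where the all-zero word gives $p_m\le\chi_m\lesssim p_m$ and superadditivity gives $\lambda_{\rm u}$; the $\lceil\cdot\rceil$-correction with backward geometric decay for $R(X)$; and the midpoint mixed-radix code compiled by Theorem~\ref{thm:iclr-exact}.

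There is one harmless slip in your off-diagonal constant. Comparing the $j$th term with $\prod_{i=r}^{r+m-1}a_i$ leaves $m-j$ factors $a_i$ in the denominator, not $m-1-j$. So the constant should be $(\underline a-1/2)^{-1}$, as in the paper, rather than $(1-1/(2\underline a))^{-1}$; yours fails, e.g., for $a_i\equiv 0.9$ and $m=4$. Only uniformity in $r$ and $m$ matters for the $m$th-root argument, so the conclusion is unaffected.
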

The shared clock description is not charged per target; no bound is asserted
for decoder running time or for an expanded listing of its rational parameters.

\begin{proof}
The refinement identity, matching calculation, and disjoint hat supports in
Theorem~\ref{thm:stable-entropy} apply with $a$ replaced by $a_k$.
They give \eqref{eq:variable-isometry}, continuity at zero, and compactness
of the source class.  The restrictions are
\eqref{eq:stable-source-restrictions} with the same replacement.
For a product starting at $r$ of length $m$, write
$A_{r,m}=\prod_{i=r}^{r+m-1}a_i$.
The upper-left block has norm at most $A_{r,m}$, the lower-right entry is
$2^{-m}$, and the off-diagonal block is bounded by
\[
 \sum_{j=0}^{m-1}\left(\prod_{i=r}^{r+j-1}a_i\right)2^{-(m-1-j)}
 \leq\frac{A_{r,m}}{\underline a-1/2}.
\]
The all-zero digit product sends the first basis vector to $A_{r,m}$
times itself.  Thus, in these fixed coordinates,
\[
 p_m\leq\chi_m\leq
 \left(1+\frac1{\underline a-1/2}\right)p_m,
 \qquad p_m=\sup_r A_{r,m}.
\]
The infima of the length-$m$ logarithmic sums in
\eqref{eq:variable-radius} are superadditive.  Their normalized limit
therefore exists, and taking $m$th roots proves the radius formula.

The covering and finite-center construction of
Theorem~\ref{thm:exact-source-code} uses only the weighted distance identity
and decreasing weights tending to zero.  Consequently,
\[
 N(\eps,\mathcal C_{\boldsymbol a})
 =\prod_{k\geq1}\left\lceil\frac{w_k}{2\eps}\right\rceil.
\]
For $y>1$, $0\leq\log_2\lceil y\rceil-\log_2y
\leq1/(y\ln2)$.  The reciprocals $2\eps/w_k$ over the active coordinates
$w_k>2\eps$, read backwards, decrease at least geometrically with ratio
$\overline a$.  Their sum is at most $1/(1-\overline a)$, proving
\eqref{eq:variable-entropy-sum}.
If $L_j/j\to\lambda$, comparison with
$(\lambda\pm\eta)j$ outside a fixed prefix, followed by $\eta\downarrow0$,
gives
$\sum_j(Y-L_j)_+=Y^2/(2\lambda)+o(Y^2)$.
Bounded discrepancy changes this sum from
$\sum_j(Y-\lambda j)_+$ by $O(Y)$, since only $O(Y)$ terms can be
nonzero.  Taking $Y=X-1$ proves both asymptotic assertions.
Finally, the exact-cover encoder retains only weights $w_k>\eps$,
so its rational midpoint masks have $O(X)$ levels.  A computable rational
clock lets the shared decoder recover those masks and the mixed-radix
alphabet sizes exactly; the finite compiler completes the construction.
\end{proof}

\begin{corollary}
\label{cor:radius-not-entropy}
There exist computable rational clocks $\boldsymbol a^{(1)},\boldsymbol a^{(2)}$
such that, with $X=\log_2\eps^{-1}$,
\[
 \rho_{\rm ch}^{(1)}=\rho_{\rm ch}^{(2)}=\frac9{10},
\]
but
\[
 \lim_{\eps\downarrow0}\frac{H_\eps(\mathcal C_{\boldsymbol a^{(1)}})}{X^2}
 =\frac1{2\log_2(10/9)}
 \neq
 \frac1{2\log_2(5/3)}
 =\lim_{\eps\downarrow0}\frac{H_\eps(\mathcal C_{\boldsymbol a^{(2)}})}{X^2}.
\]
\end{corollary}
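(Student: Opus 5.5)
We construct the two clocks explicitly and then read both invariants off Theorem~\ref{thm:variable-rate-source}. Take the constant clock $a^{(1)}_k\equiv 9/10$. Its windowed logarithmic averages are all $\log_2(10/9)$, so \eqref{eq:variable-radius} gives $\lambda_{\rm u}=\log_2(10/9)$ and $\rho_{\rm ch}^{(1)}=9/10$. We have $L_n=n\log_2(10/9)$ exactly, so \eqref{eq:variable-entropy-leading}, or simply Theorem~\ref{thm:stable-entropy} with $a=9/10$, gives the first limit $1/(2\log_2(10/9))$.

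For the second clock we want the same worst window average but a different overall average. We use the two values $9/10$ and $3/5$; both lie in $(1/2,1)$, so $\underline a=3/5$ and $\overline a=9/10$ satisfy the hypotheses. The idea is to put arbitrarily long runs of $9/10$ on a set of density zero, and to take $3/5$ everywhere else. Concretely, set $a^{(2)}_k=9/10$ if $k\in[4^j,4^j+j)$ for some $j\ge1$, and $a^{(2)}_k=3/5$ otherwise. This rule is clearly computable and rational.

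\emph{Radius.} Every entry satisfies $\log_2 a_i^{-1}\ge\log_2(10/9)$. Hence every window average is at least $\log_2(10/9)$, so $\lambda_{\rm u}\ge\log_2(10/9)$. Conversely, for each $m$ and each $j\ge m$ there is a window of length $m$ lying entirely inside the run $[4^j,4^j+j)$. That window has average exactly $\log_2(10/9)$, so the infimum over $r$ equals $\log_2(10/9)$ for every $m$. Thus $\lambda_{\rm u}=\log_2(10/9)$ and $\rho_{\rm ch}^{(2)}=9/10$ by \eqref{eq:variable-radius}.

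\emph{Entropy.} The number of indices $k\le n$ with $a^{(2)}_k=9/10$ is at most $\sum_{4^j\le n}j=O((\log n)^2)=o(n)$. Therefore
\[
L_n=n\log_2(5/3)+O\bigl((\log n)^2\bigr),
\]
so $L_n/n\to\log_2(5/3)$. Now \eqref{eq:variable-entropy-leading} gives $H_\eps/X^2\to 1/(2\log_2(5/3))$. Since $5/3\ne10/9$, the two limits differ.

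The only delicate point is the radius computation, where we must confirm that $\lambda_{\rm u}$ really sees the long runs. The limit in \eqref{eq:variable-radius} is over $m$ of an infimum over all starting levels $r$. Because runs of unbounded length exist, the infimum attains $\log_2(10/9)$ for every $m$, not merely in the limit, so this step is immediate. The leading-entropy step needs only the sublinear density bound and uses \eqref{eq:variable-entropy-leading} verbatim.
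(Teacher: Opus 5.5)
Your proof is correct and follows the paper's argument. Both arguments pair a constant $9/10$ clock with a $3/5$ background carrying zero-density runs of $9/10$ of unbounded length, read $\rho_{\rm ch}=9/10$ off the window formula \eqref{eq:variable-radius}, and obtain the entropy coefficient from $L_n/n\to\log_2(5/3)$ via \eqref{eq:variable-entropy-leading}. The only difference is cosmetic: your runs start at $4^j$ (total high-rate length $O((\log n)^2)$) rather than at the paper's $2^{j^2}$ (total length $O(\log n)$), and either count is $o(n)$.
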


\begin{proof}
Take first the constant clock $a_k=9/10$.  For the second, set $a_k=3/5$
except on the blocks
\[
 \{2^{j^2},\ldots,2^{j^2}+j-1\},\qquad j=1,2,\ldots,
\]
where $a_k=9/10$.  These disjoint blocks have unbounded lengths and zero
asymptotic density: up to a level $n$ their total length is $O(\log(n+1))$.
Every window length therefore occurs inside a high-rate block, giving
$p_m=(9/10)^m$ and $\rho_{\rm ch}=9/10$ for both clocks.
The second clock nevertheless has
$L_n/n\to\log_2(5/3)$, which proves the stated entropy coefficients by
\eqref{eq:variable-entropy-leading}.
\end{proof}

Thus the coding coefficient measures the prefix accumulation of logarithmic
contraction, whereas the chronological radius measures the least contraction
over arbitrarily placed long windows.  The distinction occurs within the
same bounded, zero-defect, two-channel cascade construction.

\section{Numerical Validation}
\label{sec:iclr-diagnostics}

The experiments examine two uses of the theory: evaluation of prescribed
refinement generators, as in Section~\ref{sec:applications}, and
finite-accuracy storage of the source classes in Section~\ref{sec:iclr-bits}.
The stability tests measure the effect of product order, frame construction,
and truncation; their exponential-spline example supplies the generator
used in the application discussion. The fixed- and variable-rate tests
implement encoding and decoding and check their bit budgets and errors.
These are synthetic, known-generator experiments, not benchmarks for
learned masks or general-purpose signal compression. In particular, the
spline test measures errors at fixed depths; it does not report certified
stopping depths obtained from numerical enclosures of the constants in
\eqref{eq:application-depth}.

\subsection{Stability Tests}

\paragraph{Setup}
These experiments use synthetic data to examine product order, convergence,
moving matching spaces, and quantization.  Panel (a) instantiates the sharpness example in
Section~\ref{sec:iclr-limit} with $q=2/3$ and
$\delta_j=0.30(0.45)^{j-1}$, using 400 shuffled orders per depth.  Panel (b)
uses the scalar alternating family
\begin{equation}
 a^{[k]}=(\beta_k+t_k,\ 1-t_k,\ 1-\beta_k).
\label{eq:iclr-alternating}
\end{equation}
Here $\beta_k$ alternates between $1/3$ and $2/3$ and
$t_k=2^{-k-4}$.  Its restricted JSR is $2/3$ and its alias defect is
$2|t_k|$.  We compare references $F_{18},F_{20},F_{22}$ on grids
$2^{-16},2^{-17},2^{-18}$ and plot the unscaled right-hand-side profile
$B_n(0.70)$ defined below. Panel (c) uses the natural
two-channel exponential-spline mask of
Proposition~\ref{prop:exponential-spline-certificate}, with exponents
$1$ and $-3/4$.  At every level it constructs $P_k$, $S_k$, and the corrected
transitions by SVD and pseudoinverse; no frame is entered by hand.  Panel (d)
compares mask-tied and independent-entry
stochastic dyadic quantization at depth 12 over 80 fixed-seed trials.  This
last comparison diagnoses algebraic tying, not a universal separation
between network encoders.
\begin{equation}
 B_n(q)=q^n+\sum_{j=1}^n2|t_j|q^{n-j}+\sum_{j>n}2|t_j|.
\label{eq:iclr-tail}
\end{equation}

\begin{figure}[htbp]
\centering
\includegraphics[width=\linewidth]{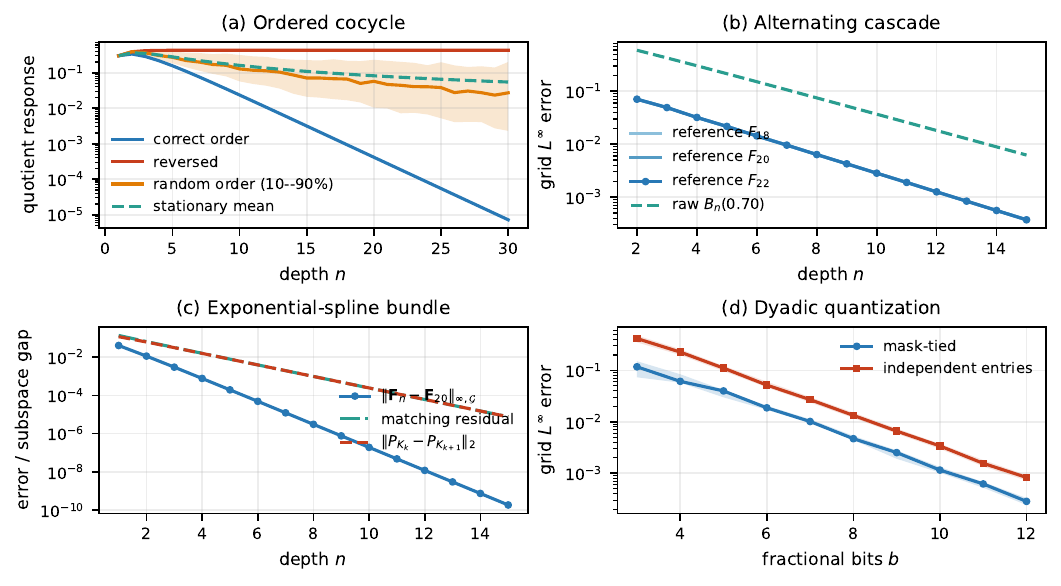}
\caption{Mechanism diagnostics.  (a) Reversal, random shuffling, and a
stationary mean change the ordered defect response.  (b) Three finite
references are visually indistinguishable; the dashed curve is the raw, not
fitted, profile $B_n(0.70)$ in equation~\eqref{eq:iclr-tail}.  (c) A
noncommuting exponential-spline cocycle converges while the computed matching
residual and successive-space gap decay.  (d) Median grid error and
interquartile bands over 80 trials.}
\label{fig:mechanism-experiments}
\end{figure}

\paragraph{Results}
In Figure~\ref{fig:mechanism-experiments}(a), the correct quotient
response falls from $0.30$ to $7.22\times10^{-6}$ by depth 30, while reversal
approaches $0.4286$.  The responses differ by nearly five orders of magnitude,
illustrating the effect of temporal order.  In panel (b), a
log-linear fit over depths 7--14 gives factor $0.6662$, consistent with the
restricted JSR $2/3$.  Changing the reference from $F_{18}$ to $F_{22}$ alters
the errors by at most $0.024\%$, while grid refinement changes them by at most
$0.35\%$; moreover $\max_n\|F_n-F_{22}\|_{\infty,\mathcal G}/B_n(0.70)=0.119$
 without fitting a multiplier.  In panel (c), the mixed $4\times4$ transition
commutator has norm $0.5464$.  The automatically constructed frames have
condition number one.  The theoretical corrected cluster JSR is $1/2$,
and both the computed matching residual and
$\|P_{K_k}-P_{K_{k+1}}\|_2$ halve asymptotically.  The measured cascade tail
decreases by a factor $0.25$ to $1.88\times10^{-10}$ at depth 15.  Panel (d) gives a
$2.5$--$3.7\times$ median reduction from mask tying.  These observations
describe the prescribed examples and their finite-grid errors.  The
spectral value is supplied by the analytical calculation, while the
experiments measure the frame conditioning, residuals, and output errors.
The quantization comparison concerns shared mask data rather than training
or a worst-case separation between network encoders.

\subsection{Fixed-Rate Tests}
\label{sec:numerical-coding}

\paragraph{Setup}
We test whether the finite-support mixed-radix construction in
Theorem~\ref{thm:exact-source-code} attains the exact covering budget
in an end-to-end implementation, and measure its finite-accuracy gain
over the dyadic construction. We use $a\in\{3/5,3/4,9/10\}$ and
\[
 X\in\{8,16,24,32,48,64,80,96,128\},\qquad \eps=2^{-X}.
\]
For each $a$, we draw 256 independent coefficient prefixes from a uniform
dyadic grid with seed 20260911 and append a known unit tail.  The same source
is encoded at every accuracy by three codecs: the optimal midpoint
mixed-radix code of \eqref{eq:midpoint-code}, the adaptive dyadic code in
Theorem~\ref{thm:stable-entropy}, and a baseline assigning $X$ fractional
bits to every retained level.  Four common deterministic sources exercise
the endpoints and cell boundaries at each accuracy.  All methods use the
same truncation depth.  We compare logical coefficient payloads, excluding
the shared values of $a$ and $\eps$, decoder software, file metadata, and byte
padding.  This is the operational model of
Section~\ref{sec:iclr-bits}.  The mixed-radix byte stream has no header,
whereas the two reusable NSC1 baseline implementations are self-describing.
Their complete file lengths are therefore not compared here.

Every source is serialized and independently decoded.  Integer and rational
arithmetic controls planning, quantization, and the uniform error
\eqref{eq:stable-source-isometry}, including the infinite tail.  Thus a
spatial sampling grid is not used to certify distortion.  The exact integer
covering cardinality is converted to an 80-digit decimal logarithm only for
plotting the entropy remainder.  The existing rational width-five ReLU
decoder is separately evaluated at the retained peak locations and boundary
points.

\begin{figure}[htbp]
\centering
\includegraphics[width=0.99\linewidth]{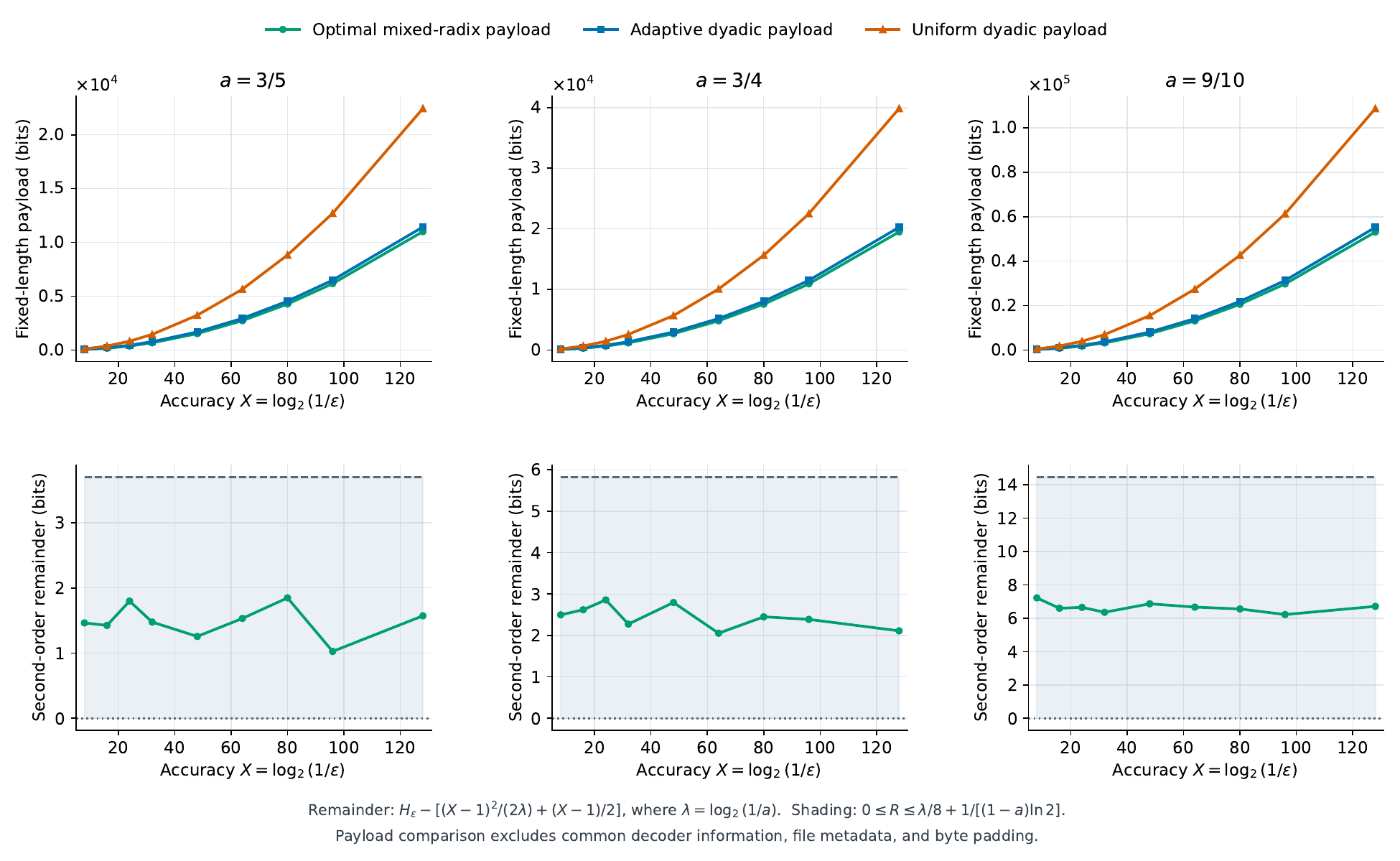}
\caption{Exact finite-accuracy coding experiment.  Top: logical payloads of
the optimal mixed-radix, adaptive dyadic, and uniform dyadic codecs on the
same retained source coordinates.  Bottom: the exact entropy after removing
the quadratic and linear terms in \eqref{eq:second-order-entropy}.  The shaded
region is the deterministic bound \eqref{eq:entropy-remainder}.  The payload
comparison excludes shared decoder information, metadata, and byte padding.}
\label{fig:exact-coding-experiment}
\end{figure}

\paragraph{Results}
Figure~\ref{fig:exact-coding-experiment} reports the implemented payloads and
the entropy remainders from Theorem~\ref{thm:exact-source-code}.
The mixed-radix payload is
$\lceil H_\eps\rceil$ at every tested point, and the measured second-order
remainder remains inside its explicit interval.  At $X=128$, the adaptive
dyadic code is only $3.91\%$--$3.94\%$ above the exact optimum, while the
uniform allocation uses slightly more than twice the optimal payload.
Table~\ref{tab:exact-coding-experiment} reports the integer budgets and the
unrounded entropy remainder.  The observed whole-class bound equals
$\eps$, which is expected because the discarded tail can approach its unit
coefficient.  The median random-source errors range from $0.9956\eps$ to
$0.9991\eps$ at $X=128$.

\begin{table}[htbp]
\centering
\caption{Finite-accuracy results at $X=128$.  All three budgets count logical
coefficient payloads under a shared decoder.  Here
$R_\eps=H_\eps-(X-1)^2/(2\lambda)-(X-1)/2$, and the final column is the bound
in \eqref{eq:entropy-remainder}.}
\label{tab:exact-coding-experiment}
\footnotesize
\setlength{\tabcolsep}{4pt}
\begin{tabular}{@{}c|rrr|rrr@{}}
\toprule
$a$ & Optimal & \shortstack{Adaptive\\dyadic} & \shortstack{Uniform\\dyadic}
& \shortstack{Adaptive\\excess} & $R_\eps$ & \shortstack{Remainder\\bound} \\
\midrule
$3/5$  & $11\,008$ & $11\,438$ & $22\,446$  & $3.91\%$ & $1.5725$ & $3.6989$ \\
$3/4$  & $19\,497$ & $20\,265$ & $39\,861$  & $3.94\%$ & $2.1121$ & $5.8227$ \\
$9/10$ & $53\,126$ & $55\,220$ & $108\,747$ & $3.94\%$ & $6.7069$ & $14.4460$ \\
\bottomrule
\end{tabular}
\end{table}

The fixed-rate experiment performs 21,060 binary round-trips and exact
whole-line error checks, consisting of all three codecs applied to the 260
sources at 27 parameter pairs.  In addition, 567 independent rational ReLU
peak and boundary evaluations pass.  The integer budgets agree at all 27
points with a separately implemented exact-cover calculation.  These checks
test the construction and finite formulas, while the packing proof establishes
optimality over arbitrary deterministic decoders.

\subsection{Variable-Rate Tests}
\label{sec:numerical-variable}
We next use three prescribed rational clocks: $a_k=9/10$ at every level, the
periodic clock $(3/5,3/4,9/10)$, and the sparse clock in
Corollary~\ref{cor:radius-not-entropy}.  For each clock we compute the exact
covering product in \eqref{eq:variable-entropy-sum} at
$X\in\{8,16,32,64,128,256,512\}$ and run the optimal codec on 16 random
sources and two endpoint sources.  These 378 additional round-trips include
the exact first omitted weight in every error certificate.  We also scan a
65,540-level prefix and report maxima over the available sliding windows.
This last calculation is only a finite-prefix diagnostic and does not certify
the supremum over all starting levels in \eqref{eq:variable-radius}.

\begin{figure}[htbp]
\centering
\includegraphics[width=0.92\linewidth]{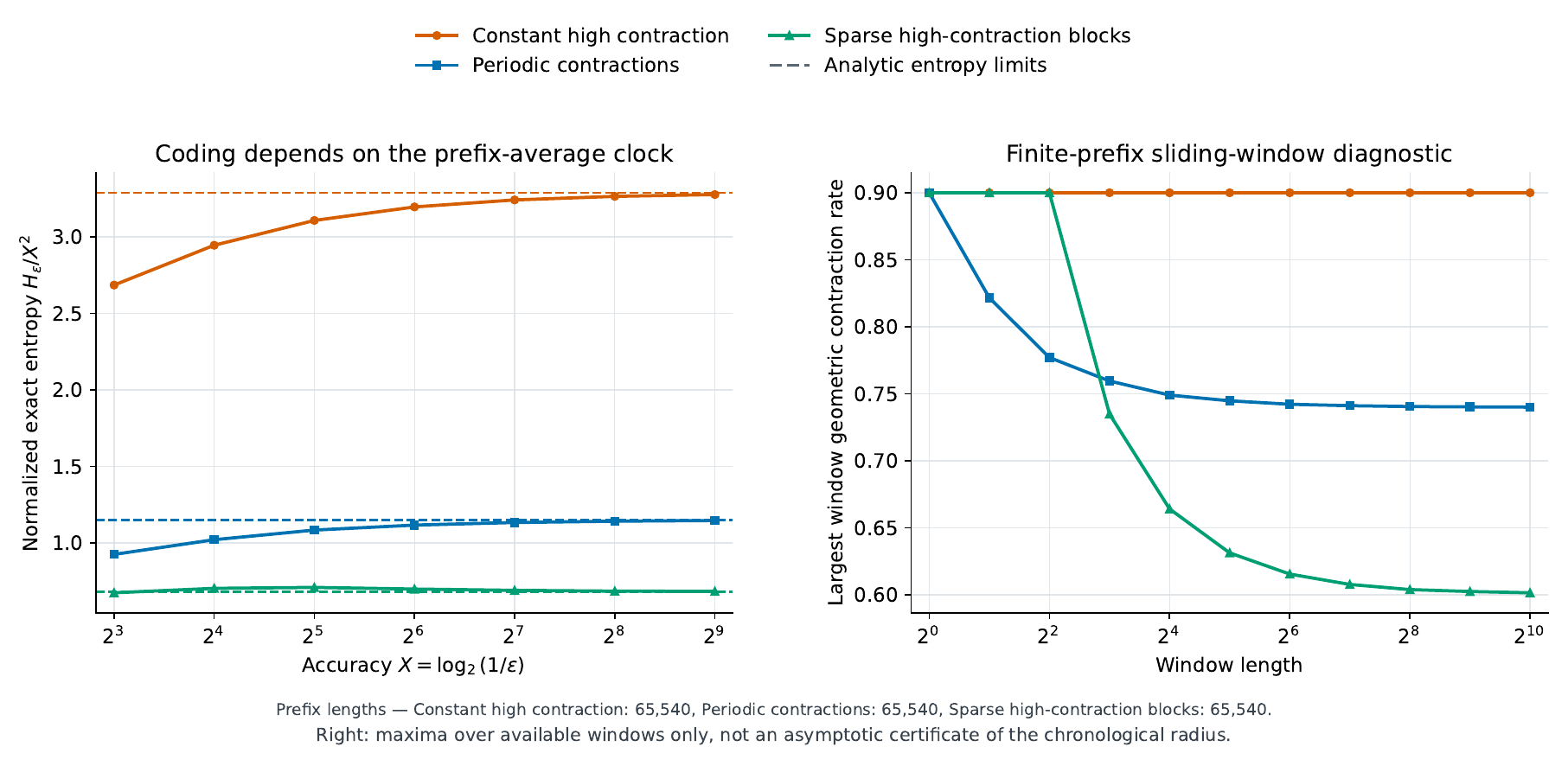}
\caption{Variable-clock experiment.  Left: exact normalized entropy, with
dashed analytic limits from \eqref{eq:variable-entropy-leading}.  The
constant-high and sparse-high clocks have different coding coefficients even
though both have chronological radius $0.9$.  Right: largest geometric
contraction rate over sliding windows contained in a 65,540-level prefix.
The sparse clock contains a high-rate block of length four in this prefix,
so long observed windows decline toward the background rate.  This finite
panel is not an asymptotic radius certificate.}
\label{fig:variable-clock-experiment}
\end{figure}

At $X=512$, $H_\eps/X^2$ is $3.277570$ for the constant-high clock and
$0.682600$ for the sparse clock, close to their respective limits $3.289407$
and $0.678458$.  Both clocks nevertheless have chronological radius $0.9$.
The periodic control gives $1.145922$ against its limit $1.150301$ and has
radius $0.739864$.  In the finite window scan, the sparse-clock maximum is
$0.9$ for length four but $0.601427$ for length 1024 because no high-rate
block that long occurs in the observed prefix.  This contrast illustrates
why the prefix average controls coding while the worst sliding windows over
the infinite clock control uniform chronological stability.

\paragraph{Conclusion}
For prescribed one-dimensional masks with common finite support,
chronological stability and exact representation give a finite-resolution
synthesis rule when a uniform spectral bound is supplied.
Section~\ref{sec:applications} makes this use explicit for
exponential-spline generators and finite signal-synthesis sums: a requested
uniform tolerance determines a truncation depth and a finite evaluator.
For the constructed two-channel classes, metric entropy also determines
an optimal finite-accuracy storage budget, which the stability radius
alone does not determine. The experiments test these mechanisms on
synthetic sources with known generators. Their sliding-window scans
describe finite prefixes, whereas infinite-clock stability and coding
optimality follow from the proofs. Mask recovery from observations,
general perfect-reconstruction filter banks, nonseparable multivariate
refinement, and efficient certification for arbitrary nonperiodic clocks
remain outside the present scope.

\section*{Acknowledgment}
\paragraph{AI disclosure}
OpenAI ChatGPT/Codex assisted with literature organization, LaTeX
formatting, and drafting and revising the background and application
discussion. The authors are responsible for the accuracy, attribution,
and originality of the submitted text, mathematical claims, and code.

\end{document}